\newcommand\numbers{\textup{(\arabic*)}}
\newcommand{\qedd}{\hfill $\blacksquare$}
\DeclareMathOperator{\weight}{weight}
\DeclareMathAlphabet{\mathmybb}{U}{bbold}{m}{n}
\newcommand\ind{\mathmybb{1}}
\renewcommand\phi{\varphi}
\newcommand\RR{{\mathbb{R}}}
\newcommand\ZZ{{\mathbb{Z}}}
\newcommand\Dd{{\mathcal{D}}}
\newcommand\Ff{{\mathcal{F}}}
\newcommand\bzer{{\pmb{0}}}
\newcommand\bone{{\pmb{1}}}
\newcommand\bh{{\pmb{h}}}
\newcommand\bx{{\pmb{x}}}
\newcommand{\ctab}[1]{\vcenter{\hbox{#1}}}
\theoremstyle{plain}
\newenvironment{repthm}[1]
  {\innercustomthm}
  {\endinnercustomthm}
\newtheorem{theorem}{Theorem}[section]
\newtheorem*{theorem*}{Theorem}
\newtheorem{corollary}[theorem]{Corollary}
\newtheorem{lemma}[theorem]{Lemma}
\newtheorem*{lemma*}{Lemma}
\newtheorem{prop}[theorem]{Proposition}
\theoremstyle{definition}
\newtheorem{definition}[theorem]{Definition}
\newtheorem{example}[theorem]{Example}
\newtheorem{remark}[theorem]{Remark}
\crefname{claim}{claim}{claims}
\Crefname{claim}{Claim}{Claims}
\crefname{app-corollary}{corollary}{corollaries}
\Crefname{app-corollary}{Corollary}{Corollaries}
\crefname{app-definition}{definition}{definitions}
\Crefname{app-definition}{Definition}{Definitions}
\crefname{figure}{figure}{figures}
\Crefname{figure}{Figure}{Figures}
\crefname{lemma}{lemma}{lemmata}
\Crefname{lemma}{Lemma}{Lemmata}
\crefname{app-lemma}{lemma}{lemmata}
\Crefname{app-lemma}{Lemma}{Lemmata}
\crefname{prop}{proposition}{propositions}
\Crefname{prop}{Proposition}{Propositions}
\crefname{case}{Case}{Propositions}
\crefname{app-theorem}{theorem}{theorems}
\Crefname{app-theorem}{Theorem}{Theorems}
\newcommand\dc{\textit{d}-complete\xspace}
\newcommand\DOCTITLE{A generalization of RSK to \dc posets}
\newcommand\AUTHOR{Son Nguyen, Joseph Vulakh, Dora Woodruff}
\newcommand\DATE{ }
\newcommand\currentname{\@currentlabelname} 
  \mathchardef\mathcomma\mathcode`\,
\title{\DOCTITLE}
\author{\AUTHOR}
\date{\DATE}
\begin{document}

\maketitle

\begin{abstract}
    The hook length formula for \dc posets expresses the number of linear extensions of a \dc poset $P$ in terms of hooks of $P$. It generalizes the usual hook length
    formula for standard Young tableaux, as well as hook length formulas for shifted Young tableaux and trees. We give a new proof of the hook length formula for \dc posets which is elementary and purely combinatorial. Our approach is to define a generalization of the
    Robinson-Schensted-Knuth bijection for \dc posets, which may
    be of independent interest.
\end{abstract}

\section{Introduction}

Given a poset $P$,
a natural question is to count the number of linear extensions of $P$.
A poset $P$ is said to have a \emph{hook length formula}
if the number of linear extensions has the form
$\frac{|P|!}{\prod_{x\in P}h_P(x)}$,
where $h_P$ is some function
from elements of $P$ to the positive integers
and $h_P(x)$ is called the \emph{hook length} of $x$.
A simple example is when $P$ is a rooted tree,
in which case the hook length of $x$ is simply the number of elements
nonstrictly dominated by $x$ in the tree.
Another example is when $P$ is a (rotation of a) Young diagram
of some partition $\lambda$.
In this case, the number of linear extensions of
$P$ is the same as the number of standard Young tableaux (SYT) of shape $\lambda$.
This is the celebrated hook length formula for SYTs.
Several generalizations of this hook length formula
(together with bijective proofs) have been given for shifted Young
diagrams, skew Young diagrams, skew shifted Young diagrams,
and other related posets; see
\cite{morales2018hook1,morales2017hook2,naruse2014schubert,
konvalinka2020hook, konvalinka2020bijective}.

One way to generalize the hook length formula is to find a more
general family of posets to which it applies.
There is a quite general family of posets, containing all
Young diagrams, shifted Young diagrams, and rooted trees, called \dc
posets. They were introduced by Proctor
\cite{proctor1999dynkin,proctor1999minuscule} and can be defined via a
small set of local axioms \cite{proctor2019d}. We refer the readers to
\cite[Section 2.2]{naruse2019skew} for a summary of the
connection between \dc posets and Weyl groups, which was their
original motivation.
By design, minuscule posets are \dc posets,
and Chaput--Perrin~\cite{chaput2009towards} used them to give a
positive combinatorial formula for computing certain
$\Lambda$-minuscule Schubert structure constants for general Kac-Moody
flag varieties. See also \cite{ilango2018unique} for a conjectural
$K$-theoretic version of Chaput--Perrin's formula. In addition, \dc
posets have the confluence property of jeu de taquin
similar to SYTs~\cite{proctor2009d}.

With all these nice properties, it is not surprising that \dc posets
also have a hook length formula,
described in \Cref{sec:defs}
and first proved by Proctor~\cite{proctor2014d}.
An additional proof was given by Kim--Yoo~\cite{kim2019hook}
and was the first completely detailed proof
in the literature,
previous proofs having only been sketched in conference proceedings.
Their proof is very intricate,
using the technique of $q$-integrals and verifying many cases by computer.
Finally, Naruse-Okada~\cite{naruse2019skew}
proved a general formula for skew \dc posets
via the machinery of equivariant $K$-theory.
In addition to generalizing to skew \dc posets,
their method yields a \emph{multivariate} generalization
of the hook length formula
which assigns each linear extension of a \dc poset $P$ a weight
and expresses the sum of weights over all linear extensions
in terms of certain hook polynomials.

The multivariate hook length formula for \dc posets
obtained by Naruse-Okada~\cite[Corollary~5.4]{naruse2019skew}
gives a combinatorial interpretation of
the colored hook length formula for root systems
of Nakada~\cite{nakada2008colored}.
The special case of Young diagrams
was independently obtained but not published by Postnikov;
the proof is given in Hopkins's notes~\cite{hopkins2014rsk}
and generalizes Pak's~\cite{pak2001hook} proof of the hook length formula
for Young diagrams.

Our goal is to give a simple, purely combinatorial proof of
the multivariate hook length formula for \dc posets.
To this end, we further generalize the proof of Pak~\cite{pak2001hook},
which considers the celebrated Robinson-Schensted-Knuth correspondence
as a volume-preserving bijection between two polytopes.
We extend this notion of RSK to \dc posets,
and this generalized RSK may be of independent interest.

Central to our argument is a generalization
of the natural notion of \textit{diagonals} from Young diagrams
to \dc posets;
these correspond to the \dc colorings
of Proctor~\cite[Proposition~8.6]{proctor1999minuscule}.
In \Cref{sec:defs}, we formally define diagonals
and denote by $D(q)$ the diagonal of an element $q$
of \dc poset $P$.
We also define a generalization of hooks to \dc posets,
where we view the hook $\bh_P(p)$ of an element $p$
as a vector
with integer coordinates $h_P^{(D)}(p)$ indexed by diagonals of $P$
summing to the hook length of $p$
used by Proctor~\cite{proctor2014d}.
With this notation, we will use RSK to prove the following theorem,
first obtained in the context of \dc posets by Naruse-Okada~\cite{naruse2019skew}.

\begin{repthm}{\Cref{thm:mainthm}}
    Let $P$ be a \dc poset.
    Consider a family of indeterminates $\{x_D\}_D$
    indexed by diagonals $D$
    of $P$.
    For each element $p$ of $P$,
    define the \emph{hook polynomial}
    $H_p(\bx) = \sum_D h_P^{(D)}(p) x_D$.
    Also, for each linear extension $T$ of $P$
    given by $p_1 > p_2 > \dots > p_n$,
    define the \emph{weight} of $T$
    to be the rational function
    with $\weight(T)^{-1} = \prod_{i = 1}^n \sum_{j = i}^n x_{D(p_i)}$.
    Then
    \[
        \sum_T \weight(T) = \frac1{\prod_{p \in P} H_p(\bx)},
    \]
    where the sum is over all linear extensions $T$ of $P$.
\end{repthm}

Setting all indeterminates to $1$ and multiplying both sides by $n!$
recovers Proctor's~\cite{proctor2014d} hook length formula.

In the following subsection, we outline the argument of the proof
and motivate our definition of RSK.
We then formalize our definitions in \Cref{sec:defs}
and prove preliminary lemmas about \dc posets in \Cref{sec:lemmas}.
We define RSK for \dc posets in \Cref{sec:rsk},
and give several of its properties in \Cref{sec:rsk_prop}.
Finally, in \Cref{sec:proof},
we prove the multivariable hook length formula.

\subsection*{Proof sketch of \Cref{thm:mainthm}}
\label{subsec:proof_sketch}

We now sketch our proof of the multivariate hook length formula.
Our strategy follows
Pak's geometric proof of the hook length formula in~\cite{pak2001hook}.

To each \dc poset $P$, we associate two polytopes in $\RR^{|P|}$. The
first polytope is $P_{\text{fillings}}$, given by $x_p \geq 0$ for all
$p \in P$ and $\sum_{p \in P}H_p(\bx)x_p \leq 1$. The
volume of $P_{\text{fillings}}$ can be shown to be
\[
    \frac1{n!} \prod_{p \in P} \frac1{H_p(\bx)}.
\]
The second polytope is $P_{\text{RPP}}$, given by $x_p \geq 0$ for all $p \in P$, $\sum_{p \in P} x_p \leq 1$, and $x_p\leq x_q$ if $p\geq_P q$. Here, RPP stands for \emph{reverse plane partition}. The volume of $P_{\text{RPP}}$ can be shown to be
\[
    \frac1{n!} \sum_T \weight(T),
\]
where the sum is over all linear extensions $T$ of $P$.

Thus, to show \Cref{thm:mainthm},
it suffices to show that $P_{\text{fillings}}$ and $P_{\text{RPP}}$
have the same volume.
For this, we will define a piecewise-linear,
volume-preserving maps that sends
$P_{\text{fillings}}$ to $P_{\text{RPP}}$.
We call this map RSK, for reason explained in
\Cref{subsec:rsk_sketch}.
Our RSK map can be constructed via a series of \emph{toggle operations},
each of which has determinant $\pm 1$
when viewed as a linear map
$\mathbb{R}^{|\lambda|} \to \mathbb{R}^{|\lambda|}$,
whence RSK is volume-preserving
(\Cref{prop:bijec_d_volume}).
It also follows
that the toggle operations are invertible.
Hence, it remains to show that RSK maps
$P_{\text{fillings}}$ into $P_{\text{RPP}}$.
This (\Cref{prop:bijec_d_hl}) is the main technical part of the proof.

\subsection*{Acknowledgements}

We thank Alex Postnikov
for providing many valuable suggestions.
We also thank Soichi Okada for helpful feedback
on an earlier version of this manuscript.
The second author was supported by MIT UROP.

\section{Definitions}
\label{sec:defs}

Proctor~\cite{proctor1999dynkin} defined \dc posets
in terms of simple posets denoted $d_k(1)$.

\begin{definition}[\cite{proctor1999dynkin}]
    \label{def:d_k(1)}
    Let $k \geq 3$.
    The poset $d_k(1)$ has $2k - 2$ elements,
    of which two are incomparable,
    $k - 2$ form a chain above the two incomparable elements,
    and $k - 2$ form a chain below the two incomparable elements
    (see \Cref{ex:neck}). 
\end{definition}

We will use the following terminology of Proctor~\cite{proctor1999dynkin, proctor2019d}. 
Let $P$ be a poset.
\begin{itemize}
    \item The two incomparable elements of $d_k(1)$ are called the
        \emph{side elements}. 
    \item The $k - 2$ elements above the side elements are called
        \emph{neck elements}.
        For $k \geq 4$,
        all but the lowest neck element are called
        \emph{strict neck elements}. 
    \item The $k-2$ elements below the side elements are called
        \emph{tail elements}.
        For $k \geq 4$, all but the highest tail element are called
        \emph{strict tail elements}.
    \item An interval $[p,q]$ of $P$ is called
        a \emph{$d_k$-interval} if it is isomorphic to $d_k(1)$.
        The side, neck, and tail elements of $[p, q]$
        are those that correspond respectively
        to side, neck, and tail elements of $d_k(1)$.
        We will often simplify terminology
        and refer simply to $d$-intervals when the value of $k$ does not matter.
    \item A subposet of $P$ is called \emph{$d_k^-$-convex}
        if it is convex and isomorphic to $d_k(1)\setminus \{\sup d_k(1)\}$.
        Again, we refer simply to $d^-$-convex sets
        when the value of $k$ does not matter.
    \item We call $d_3$-intervals \emph{diamonds}. 
\end{itemize}

\begin{example}
    \label{ex:neck}
    A $d_4$-interval is shown below.
    The neck elements are $q$ and $d$, with $q$ a strict neck element.
    The side elements are $a$ and $b$,
    and the tail elements are $c$ and $p$, with $p$ a strict tail element. The interval $[p,d]$ is a $d_4^-$-convex set.
    \begin{center}
        \begin{tikzcd}[ampersand replacement=\&]
            \& q \\
            \& d \\
            a \&\& b \\
            \& c \\
            \& p
            \arrow[no head, from=2-2, to=1-2]
            \arrow[no head, from=3-1, to=2-2]
            \arrow[no head, from=3-3, to=2-2]
            \arrow[no head, from=4-2, to=3-1]
            \arrow[no head, from=4-2, to=3-3]
            \arrow[no head, from=5-2, to=4-2]
        \end{tikzcd}
    \end{center}
\end{example}

A \dc poset is a poset which is built from $d_k$-intervals
in a `coherent way'.

\begin{definition}[\cite{proctor1999dynkin}]
    \label{def:d_comp}
    A finite poset $P$ is \emph{\dc} if it satisfies the following conditions:
    \begin{itemize}
        \item for each $d_k^-$-convex subset $I$ of $P$,
            there exists an element $z$ of $P$
            such that $I \cup \{z\}$ is a $d_k$-interval;
        \item for each $d$-interval $[w, z]$,
            the element $z$ does not cover any elements outside $[w, z]$;
        \item there are no $d^-$-convex sets
            which differ only in the minimal elements.
    \end{itemize}
\end{definition}

\begin{remark}
    Any rooted tree is trivially a \dc poset
    because it contains no $d^-$ convex sets.
    Young diagrams can be viewed as \dc posets:
    there is an element corresponding to each box,
    with one element covering another
    if one is directly above or to the left of the other.
    Shifted Young diagrams can be viewed as \dc posets
    in a similar way.
\end{remark}

\begin{example}
    \label{ex:d_comp}
    An example of a \dc poset which is not a tree,
    Young diagram, or shifted Young diagram is shown below.
    \begin{center}
        \begin{tikzcd}
            && \bullet \\
            && \bullet \\
            & \bullet && \bullet \\
            \bullet && \bullet && \bullet \\
            & \bullet & \bullet & \bullet \\
            \arrow[no head, from=1-3, to=2-3]
            \arrow[no head, from=2-3, to=3-2]
            \arrow[no head, from=2-3, to=3-4]
            \arrow[no head, from=3-2, to=4-1]
            \arrow[no head, from=3-2, to=4-3]
            \arrow[no head, from=3-4, to=4-3]
            \arrow[no head, from=3-4, to=4-5]
            \arrow[no head, from=4-1, to=5-2]
            \arrow[no head, from=4-3, to=5-2]
            \arrow[no head, from=4-3, to=5-4]
            \arrow[no head, from=4-5, to=5-4]
            \arrow[no head, from=4-3, to=5-3]
        \end{tikzcd}
    \end{center}
\end{example}

\begin{remark}
    An \emph{upper set} of a poset $P$ is a subposet $P'$ of $P$ such
    that if $x \in P'$ and $y \geq x$, then $y \in P'$.
    It follows from the axioms that upper sets of \dc posets
    are themselves \dc. The same is not true for lower sets, however. For example, a $d_k^-$-interval is a lower set of a $d_k$-interval, but it is not a \dc poset.
\end{remark}

The following generalization of the usual notion of diagonals in a Young diagram
will be central in our generalized hook length formula.

\begin{definition}
    \label{def:diag}
    Let $P$ be a \dc poset.
    Define an equivalence relation on the elements of $P$ as follows:
    for $p$, $q \in P$,
    let $p \sim q$
    if $[p, q]$ is a $d$-interval
    and extend the equivalence relation transitively.
    We call the equivalence classes under $\sim$ \emph{diagonals},
    and if $p \sim q$,
    we say that $p$ and $q$ are in the same diagonal.
    For an element $p$ of $P$,
    we let $D(p)$ denote the diagonal of $P$ containing $p$.
    Also, we let $\Dd(P)$ denote the set of diagonals of $P$.
\end{definition}

The partition of $P$ into its diagonals
corresponds to the coloring of $P$
obtained in~\cite[Proposition~8.6]{proctor1999minuscule}.

We also need a formal notion of diagonal adjacency.

\begin{definition}
    \label{def:diag_adj}
    We say two elements of $P$ are \emph{adjacent}
    if one element covers the other,
    and we say two diagonals $D_1$, $D_2$ of a \dc poset $P$ are adjacent
    if there exist $p_1 \in D_1$, $p_2 \in D_2$ which are adjacent.
\end{definition}

\begin{example}
    \label{ex:diag}
    The following diagram labels the diagonals
    of the \dc poset from \Cref{ex:d_comp}.
    The pairs of adjacent diagonals are
    $(D_1, D_2)$, $(D_2, D_3)$, $(D_2, D_4)$, $(D_3, D_5)$, $(D_4, D_6)$.
    \begin{center}
        \begin{tikzcd}
            && D_1 \\
            && D_2 \\
            & D_3 && D_4 \\
            D_5 && D_2 && D_6 \\
            & D_3 & D_1 & D_4 \\
            \arrow[no head, from=1-3, to=2-3]
            \arrow[no head, from=2-3, to=3-2]
            \arrow[no head, from=2-3, to=3-4]
            \arrow[no head, from=3-2, to=4-1]
            \arrow[no head, from=3-2, to=4-3]
            \arrow[no head, from=3-4, to=4-3]
            \arrow[no head, from=3-4, to=4-5]
            \arrow[no head, from=4-1, to=5-2]
            \arrow[no head, from=4-3, to=5-2]
            \arrow[no head, from=4-3, to=5-4]
            \arrow[no head, from=4-5, to=5-4]
            \arrow[no head, from=4-3, to=5-3]
        \end{tikzcd}
    \end{center}
    The following diagram labels the diagonals
    of a \dc poset arising from a shifted Young diagram.
    \begin{center}
        \begin{tikzcd}
            D_1 \\
            & D_2 \\
            D_3 && D_4 \\
            & D_2 && D_5 \\
            D_1 && D_4 && D_6 \\
            & D_2 && D_5 \\
            \arrow[no head, from=1-1, to=2-2]
            \arrow[no head, from=2-2, to=3-1]
            \arrow[no head, from=2-2, to=3-3]
            \arrow[no head, from=3-1, to=4-2]
            \arrow[no head, from=3-3, to=4-2]
            \arrow[no head, from=3-3, to=4-4]
            \arrow[no head, from=4-2, to=5-1]
            \arrow[no head, from=4-2, to=5-3]
            \arrow[no head, from=4-4, to=5-3]
            \arrow[no head, from=4-4, to=5-5]
            \arrow[no head, from=5-1, to=6-2]
            \arrow[no head, from=5-3, to=6-2]
            \arrow[no head, from=5-3, to=6-4]
            \arrow[no head, from=5-5, to=6-4]
        \end{tikzcd}
    \end{center}
    We observe that for shifted Young diagrams,
    most diagonals are as expected from Young diagrams.
    However, the leftmost elements alternate between two diagonals,
    rather than forming a single diagonal.
\end{example}

Finally, we need to formalize the notion of a hook vector in a \dc poset.

\begin{definition}
    \label{def:hook}
    For an element $p$ of a \dc poset $P$,
    we define the \emph{hook vector} of $p$,
    denoted $\bh_P(p) = [h_P^{(D)}(p)]_{D \in \Dd(P)}$,
    as an element of $\ZZ^{|\Dd(P)|}$ recursively as follows.
    If $p$ is not a neck element of any $d$-interval,
    then $h_P^{(D)}(p)$ is the number of elements of $D$
    nonstrictly dominated by $p$.
    If $p$ is the maximal element of some $d$-interval $[p', p]$
    with side elements $w$, $z$,
    then $\bh_P(p) = \bh_P(w) + \bh_P(z) - \bh_P(p')$.
    If the poset $P$ is clear from context,
    we will simply write $\bh(p)$.
    Also, for convenience, if $p \notin P$,
    we will set $\bh_P(p) = \bzer$.
\end{definition}

We will see in \Cref{cor:hook} that hook vectors are indeed well-defined.

\begin{example}
    \label{ex:hook}
    Consider the poset $d_4(1)$ with elements labeled as below.
    We will order the diagonals by $D(1)$, $D(2)$, $D(3)$, $D(4)$.
    \begin{center}
        \begin{tikzcd}[ampersand replacement=\&]
            \& 6 \\
            \& 5 \\
            3 \& {} \& 4 \\
            \& 2 \\
            \& 1
            \arrow[no head, from=1-2, to=2-2]
            \arrow[no head, from=2-2, to=3-1]
            \arrow[no head, from=2-2, to=3-3]
            \arrow[no head, from=3-1, to=4-2]
            \arrow[no head, from=4-2, to=3-3]
            \arrow[no head, from=4-2, to=5-2]
        \end{tikzcd}
    \end{center}
    The hook vectors of elements $1$, $2$, $3$, $4$ are, respectively,
    $[1, 0, 0, 0]$, $[1, 1, 0, 0]$, $[1, 1, 1, 0]$, $[1, 1, 0, 1]$,
    because these elements are not top elements of any $d$-interval.
    Now, $5$ is the top element of the $d_3$-interval
    $[2,5]$ with side elements $3$ and $4$,
    so the hook vector of $5$ is $[1, 1, 1, 1]$.
    Finally, $6$ is the top element of $d_4(1)$ with side elements $3$ and $4$,
    so the hook vector of $6$ is $[1, 2, 1, 1]$.
\end{example}

To assist with calculations involving hook vectors,
we introduce some additional notation.

\begin{definition}
    For an element $p \in P$,
    let $\bone_p \in \ZZ^{|\Dd(P)|}$
    be given by $\bone_p = [\delta_{D(p), D}]_{D \in \Dd(P)}$.
    For a subset $S \subseteq P$,
    let $\bone_S = \sum_{p \in S} \bone_p$,
    which we call the \emph{indicator vector} of $S$.
\end{definition}

We observe that the sum of the entries of $\bh_P(p)$
is the hook length defined by Proctor~\cite{proctor2014d}.
Thus with this definition, Proctor's hook length formula is as follows.

\begin{theorem}[\cite{proctor2014d}]
    \label{thm:proctor}
    The number of linear extensions of a \dc poset $P$
    is given by $\frac{|P|!}{\prod_{p \in P} \sum_{D \in \Dd(P)} h_P^{(D)}(p)}$.
\end{theorem}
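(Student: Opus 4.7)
The plan is to deduce \Cref{thm:proctor} as the specialization at $x_D = 1$ of the multivariate refinement \Cref{thm:mainthm} stated earlier, and to attack that stronger identity via the Pak-style geometric strategy outlined in the proof sketch. Setting every indeterminate $x_D = 1$ in \Cref{thm:mainthm} turns each hook polynomial $H_p(\bx)$ into the scalar hook length $\sum_D h_P^{(D)}(p)$ used by Proctor, and turns each weight $\weight(T)^{-1} = \prod_{i=1}^{n}\sum_{j=i}^n x_{D(p_i)}$ into $\prod_{i=1}^n (n-i+1) = n!$, where $n = |P|$. The identity $\sum_T \weight(T) = 1/\prod_p H_p(\bx)$ thus collapses to $|\{\text{linear extensions of }P\}|/n! = 1/\prod_p \sum_D h_P^{(D)}(p)$, which is exactly \Cref{thm:proctor}. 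Thus the bulk of the work is to establish \Cref{thm:mainthm}.

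To prove \Cref{thm:mainthm}, I would associate to $P$ the two polytopes $\Pp_{\mathrm{fill}}$ and $\Pp_{\mathrm{RPP}}$ in $\RR^{|P|}$ described in the sketch. The volume of $\Pp_{\mathrm{fill}}$ is computed via the linear change of variables $y_p \defeq H_p(\bx) x_p$, which sends $\Pp_{\mathrm{fill}}$ to the standard $|P|$-simplex with Jacobian determinant $\prod_p H_p(\bx)$; hence $\mathrm{vol}(\Pp_{\mathrm{fill}}) = \frac{1}{n!}\prod_p H_p(\bx)^{-1}$. For $\Pp_{\mathrm{RPP}}$, I would decompose the polytope into subsimplices indexed by linear extensions $T$, namely the region on which the coordinates are ordered according to $T$; a direct integration in the telescoping coordinates $x_{p_i} - x_{p_{i+1}}$ yields $\mathrm{vol}(\Pp_{\mathrm{RPP}}) = \frac{1}{n!}\sum_T \weight(T)$.

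The heart of the argument is then to construct a piecewise-linear, volume-preserving bijection $\Pp_{\mathrm{fill}} \to \Pp_{\mathrm{RPP}}$, the generalized RSK map. Following the paper's plan, I would build RSK as a composition of local toggle operations indexed by elements of $P$, each perturbing only a small set of coordinates in a neighborhood of a fixed element and acting, on every region of its piecewise-linear domain, as an integer-unimodular linear map. Volume preservation and invertibility of RSK then follow immediately from \Cref{prop:bijec_d_volume}, and the problem is reduced to showing that RSK actually sends $\Pp_{\mathrm{fill}}$ into $\Pp_{\mathrm{RPP}}$.

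The main obstacle is precisely this containment (\Cref{prop:bijec_d_hl}): one must verify that the single simplex-type constraint $\sum_p H_p(\bx) x_p \le 1$ defining $\Pp_{\mathrm{fill}}$ is transported through the toggles into the chain inequalities $x_p \le x_q$ for $p \ge_P q$ together with $\sum_p x_p \le 1$ that define $\Pp_{\mathrm{RPP}}$. The crucial structural input is the recursive identity for hook vectors from \Cref{def:hook}, namely $\bh_P(z) = \bh_P(w) + \bh_P(x) - \bh_P(p)$ on each $d$-interval $[p,z]$ with side elements $w$, $x$, and the argument must propagate this identity through toggles across every $d$-interval and every $d^-$-convex set of $P$. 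I expect this step to require a delicate induction on the structure of $P$ built on the structural lemmata of \Cref{sec:lemmas}, and to concentrate essentially all of the combinatorial difficulty of the proof.
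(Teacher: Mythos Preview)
Your proposal is correct and follows essentially the same route as the paper: deduce \Cref{thm:proctor} by specializing \Cref{thm:mainthm} at $x_D=1$, and prove \Cref{thm:mainthm} by the Pak-style polytope argument with the generalized RSK built from toggles, with \Cref{prop:bijec_d_volume} giving volume preservation and \Cref{prop:bijec_d_hl} the key technical step. One small sharpening: the order-reversing (chain) inequalities for the image come directly from the elementary behavior of toggles, while \Cref{prop:bijec_d_hl} is really the diagonal-sum identity $S_P(D)=\sum_p h_P^{(D)}(p)\,t_p$, which is exactly what transports the weighted simplex constraint.
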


We prove the following generalization,
proved in the context of root systems
by Naruse-Okada~\cite[Corollary~5.4]{naruse2019skew}
and first stated for \dc posets by
Nakada~\cite{nakada2008colored}.

\begin{theorem}
    \label{thm:mainthm}
    Let $P$ be a \dc poset.
    Consider a family of indeterminates $\{x_D\}_{D \in \Dd(P)}$
    indexed by diagonals $D$
    of $P$.
    For each element $p$ of $P$,
    define the \emph{hook polynomial}
    $H_p(\bx) = \sum_{D \in \Dd(P)} h_P^{(D)}(p) x_D$.
    Also, for each linear extension $T$ of $P$
    given by $p_1 > p_2 > \dots > p_n$,
    define the \emph{weight} of $T$
    to be the rational function
    with $\weight(T)^{-1} = \prod_{i = 1}^n \sum_{j = i}^n x_{D(p_i)}$.
    Then
    \[
        \sum_T \weight(T) = \frac1{\prod_{p \in P} H_p(\bx)},
    \]
    where the sum is over all linear extensions $T$ of $P$.
\end{theorem}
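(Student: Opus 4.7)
The plan is to execute the geometric strategy outlined in the paper's proof sketch, adapting Pak's argument for the classical hook length formula. I would introduce two polytopes in $\RR^n$ where $n = |P|$: the filling polytope $P_{\text{fillings}}$ cut out by $x_p \geq 0$ and $\sum_{p \in P} H_p(\bx)\, x_p \leq 1$, and the RPP polytope $P_{\text{RPP}}$ cut out by $x_p \geq 0$, $\sum_{p \in P} x_p \leq 1$, and $x_p \leq x_q$ whenever $p \geq_P q$. The theorem reduces to the equality of their volumes, since by two independent direct computations one has $\operatorname{vol}(P_{\text{fillings}}) = \frac{1}{n!}\prod_p H_p(\bx)^{-1}$ and $\operatorname{vol}(P_{\text{RPP}}) = \frac{1}{n!}\sum_T \weight(T)$.

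The first volume is immediate from the change of variables $y_p = H_p(\bx)\, x_p$, which carries the filling polytope onto the standard simplex scaled by $\prod_p H_p(\bx)^{-1}$. The second volume falls out of the standard order-simplex triangulation of $P_{\text{RPP}}$: for each linear extension $T = (p_1 > p_2 > \cdots > p_n)$, the simplex $\{0 \leq x_{p_n} \leq \cdots \leq x_{p_1}\} \cap \{\sum_i x_{p_i} \leq 1\}$, after rescaling the coordinates one at a time by $x_{D(p_i)}$, has volume $\frac{1}{n!}\weight(T)$; this matches the product form $\weight(T)^{-1} = \prod_i \sum_{j \geq i} x_{D(p_i)}$ exactly, and summing over $T$ gives the claim. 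Both computations are routine.

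The substantive step is to construct a piecewise-linear, volume-preserving bijection $P_{\text{fillings}} \to P_{\text{RPP}}$, which plays the role of RSK. Following Pak, I would build it as a composition of local toggle operations processed in an order compatible with some reverse linear extension of $P$. Each toggle acts affinely on a small number of coordinates governed by a single local $d$-interval neighborhood, and its form is forced by the requirement that, after toggling the element $p$, the coordinate $x_p$ has been converted from a filling-type coordinate (weighted by $H_p(\bx)$) into an RPP-type coordinate (satisfying the comparisons $x_p \leq x_q$ with $q \geq_P p$ already processed). Each toggle has Jacobian determinant $\pm 1$ by direct inspection, so the full RSK map is automatically volume-preserving and invertible; this is \Cref{prop:bijec_d_volume}.

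The main obstacle, and the technical heart of the argument, will be showing that the image actually lies in $P_{\text{RPP}}$, which is the content of \Cref{prop:bijec_d_hl}. Concretely, one must verify by induction along the processing order that after each toggle the current coordinates stay nonnegative and accumulate the monotonicity inequalities they are supposed to have. The case analysis collapses onto each $d_k$-interval, where the recursive definition $\bh_P(p) = \bh_P(w) + \bh_P(z) - \bh_P(p')$ is precisely what makes the local arithmetic balance out; the structural facts about diagonals and $d$-intervals proved in \Cref{sec:lemmas} should supply the adjacency and convexity statements that drive the induction. I expect the trickiest point to be the interplay between side and neck elements of a $d_k$-interval with $k \geq 4$, since this is exactly where the hook vector and the diagonal structure depart from their Young-diagram analogues, and where a naive toggle modeled on row/column insertion would fail to produce the correct comparison inequalities.
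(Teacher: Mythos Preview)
Your high-level strategy matches the paper's: two polytopes, compute their volumes separately, and connect them by a volume-preserving piecewise-linear RSK built from toggles. But there is a genuine gap in how you identify the key technical content.

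First, the RPP polytope must carry the \emph{weighted} constraint $\sum_{p \in P} x_{D(p)}\, s_p \leq 1$, not the unweighted $\sum_p s_p \leq 1$ you wrote (the paper's own proof sketch in the introduction has this slip; the actual proof in \Cref{sec:proof} uses the weighted version). With the unweighted constraint each order simplex has volume $1/(n!)^2$, independent of the indeterminates, so you cannot recover $\weight(T)$; your proposed ``rescaling the coordinates one at a time by $x_{D(p_i)}$'' does not repair this, because rescaling $s_{p_i}$ by $x_{D(p_i)}$ destroys the chain of inequalities $s_{p_1} \leq s_{p_2} \leq \cdots$ that cuts out the simplex for $T$.

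Second, and more seriously, you have mischaracterized \Cref{prop:bijec_d_hl}. It is \emph{not} the statement that the RSK image is nonnegative and order-reversing; that fact is easy and is dispatched in a short remark right after the definition of RSK, since each toggle visibly preserves order-reversal. What \Cref{prop:bijec_d_hl} actually asserts is the linear identity $\sum_{p \in D} s_p = \sum_{p \in P} h_P^{(D)}(p)\, t_p$ for every diagonal $D$. Summing this against $x_D$ yields $\sum_p x_{D(p)} s_p = \sum_p H_p(\bx)\, t_p$, which is exactly what carries the bounding hyperplane of $P_{\text{fillings}}$ onto that of $P_{\text{RPP}}$. The real work, and the place where the recursion $\bh(p) = \bh(w) + \bh(z) - \bh(p')$ is used, lies in proving this diagonal-sum identity by induction along a carefully chosen (``stable'') insertion order, via an auxiliary lemma (\Cref{lem:diag_sum_adj}) describing how a single insertion changes the diagonal sums. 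Your plan to ``verify nonnegativity and accumulate monotonicity inequalities'' is aimed at the wrong target.
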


\begin{remark}
    Setting all of the indeterminates $x_D$ in the theorem to $1$ recovers
    Proctor's original result as stated in \Cref{thm:proctor}.
\end{remark}

\begin{example}
    \label{ex:mainthm}
    Consider the poset $d_4(1)$ from \Cref{ex:neck,ex:hook}.
    In the diagram below we label elements by their diagonals.
    \begin{center}
        \begin{tikzcd}[ampersand replacement=\&]
            \& 1 \\
            \& 2 \\
            3 \& {} \& 4 \\
            \& 2 \\
            \& 1
            \arrow[no head, from=1-2, to=2-2]
            \arrow[no head, from=2-2, to=3-1]
            \arrow[no head, from=2-2, to=3-3]
            \arrow[no head, from=3-1, to=4-2]
            \arrow[no head, from=4-2, to=3-3]
            \arrow[no head, from=4-2, to=5-2]
        \end{tikzcd}
    \end{center}
    The poset $d_4(1)$ has two linear extensions,
    determined by a choice of ordering of the two side elements.
    This gives the sum of weights to be
    \begin{align*}
        &\tfrac1{x_1(x_1 + x_2)(x_1 + x_2 + x_3)(x_1 + x_2 + x_3 + x_4)
            (x_1 + x_2 + x_3 + x_4 + x_2)(x_1 + x_2 + x_3 + x_4 + x_2 + x_1)} \\
        &\phantom{=}+
        \tfrac1{x_1(x_1 + x_2)(x_1 + x_2 + x_4)(x_1 + x_2 + x_3 + x_4)
            (x_1 + x_2 + x_3 + x_4 + x_2)(x_1 + x_2 + x_3 + x_4 + x_2 + x_1)} \\
        &= \tfrac1{x_1(x_1 + x_2)(x_1 + x_2 + x_3 + x_4)
            (x_1 + 2x_2 + x_3 + x_4)(2x_1 + 2x_2 + x_3 + x_4)}
            \left( \tfrac1{x_1 + x_2 + x_3} + \tfrac1{x_1 + x_2 + x_4} \right)
            \\
        &= \tfrac{2x_2 + 2x_2 + x_3 + x_4}
            {x_1(x_1 + x_2)(x_1 + x_2 + x_3)(x_1 + x_2 + x_4)
            (x_1 + x_2 + x_3 + x_4)(x_1 + 2x_2 + x_3 + x_4)
            (2x_1 + 2x_2 + x_3 + x_4)} \\
        &= \tfrac1
            {x_1(x_1 + x_2)(x_1 + x_2 + x_3)(x_1 + x_2 + x_4)
            (x_1 + x_2 + x_3 + x_4)(x_1 + 2x_2 + x_3 + x_4)},
    \end{align*}
    which by \Cref{ex:hook} is the reciprocal of the product
    of the hook polynomials,
    demonstrating \Cref{thm:mainthm} for $d_k(4)$.
\end{example}

\section{Preliminary results}
\label{sec:lemmas}

Before defining RSK for \dc posets and proving
the multivariable hook length formula,
we give some simple structural facts about \dc posets.
In particular, we show that the hook vectors from \Cref{def:hook}
section are well-defined, and we show some useful facts about the
diagonals of \dc posets.
Throughout this section, let $P$ be a \dc poset.

\Cref{lem:forbid_subposet},
\Cref{prop:cover_bound}, and \Cref{prop:d_int_closure}
appear in Proctor's works~\cite{proctor1999dynkin,proctor1999minuscule};
for completeness, we give their proofs.

\begin{lemma}[{\cite[Section 3]{proctor1999dynkin}}]
    \label{lem:forbid_subposet}
    The poset $P$ cannot contain a subset
    $\{p_1, p_2, p_3, q_1, q_2, q_3\}$
    such that for $i \in [3]$,
    the sets $\{p_i, q_{i + 1}, q_{i + 2}\}$
    with indices taken $\bmod\ 3$
    are $d_3^-$-convex,
    as in the diagram below.

    \begin{center}
        \begin{tikzcd}[ampersand replacement=\&]
            q_3 \& q_2 \& q_1 \\
            p_1 \& p_2 \& p_3
            \arrow[no head, from=2-1, to=1-1]
            \arrow[no head, from=2-1, to=1-2]
            \arrow[no head, from=2-2, to=1-1]
            \arrow[no head, from=2-2, to=1-3]
            \arrow[no head, from=2-3, to=1-2]
            \arrow[no head, from=2-3, to=1-3]
        \end{tikzcd}
    \end{center}
\end{lemma}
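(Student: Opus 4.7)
The plan is to derive a contradiction by producing an infinite strictly ascending sequence of elements of $P$, which cannot exist since $P$ is finite. Assume toward a contradiction that $P$ contains such a six-element configuration. First I would apply the first axiom of \Cref{def:d_comp} to each of the three $d_3^-$-convex sets $\{p_i, q_{i+1}, q_{i+2}\}$ (indices taken mod $3$) to obtain elements $z_i \in P$ so that $[p_i, z_i]$ is a diamond with side elements $q_{i+1}$ and $q_{i+2}$; in particular, $z_i$ covers both $q_{i+1}$ and $q_{i+2}$.

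The key step is to verify that $z_1, z_2, z_3$ are pairwise incomparable. If $z_i = z_j$ for some $i \neq j$, then the common element covers $q_{j+1}$ and $q_{j+2}$, and since $\{j+1, j+2\} \ni i \pmod 3$, one of these elements equals $q_i$, which lies outside the diamond $[p_i, z_i]$; this contradicts the second axiom of \Cref{def:d_comp}. On the other hand, if $z_i < z_j$ strictly, then choosing the unique index $k$ such that $q_k$ is a common side element of both diamonds gives a chain $q_k < z_i < z_j$, preventing $z_j$ from covering $q_k$ and contradicting the covering data from $[p_j, z_j]$. Hence the $z_i$ are pairwise incomparable.

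Once the $z_i$ are pairwise incomparable, each set $\{q_j, z_{j+1}, z_{j+2}\}$ is itself $d_3^-$-convex: both $z_{j+1}$ and $z_{j+2}$ cover $q_j$ (read off directly from the diamonds constructed above), and they are incomparable to each other. Therefore $\{q_1, q_2, q_3, z_1, z_2, z_3\}$ is a new instance of the forbidden configuration, positioned strictly above the original. Iterating this construction yields an infinite strictly ascending sequence of such configurations in $P$, contradicting the finiteness of $P$. The main obstacle is the pairwise incomparability step, which requires carefully combining the first two axioms of the \dc definition with the covering data inside each diamond; once that is in place, the iteration argument is essentially automatic.
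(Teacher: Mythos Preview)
Your proof is correct and follows essentially the same approach as the paper: complete each $d_3^-$-convex triple to a diamond via the first axiom, then observe that the three new top elements together with the old $q_i$'s form another copy of the forbidden configuration one level higher. The only cosmetic differences are that the paper phrases the contradiction via maximality (choose a configuration with maximal $q_1$) rather than infinite ascent, and that you are more explicit in ruling out comparability among the $z_i$---the paper only argues distinctness directly and leaves incomparability implicit in the interval/convexity reasoning, whereas your chain argument for the case $z_i < z_j$ makes this step cleaner.
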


\begin{proof}
    Suppose for the sake of contradiction that such a subset does exist,
    and let $\{p_1, p_2, p_3, q_1, q_2, q_3\}$
    be one with maximal $q_1$.
    Because $P$ is \dc,
    there exist $r_1$, $r_2$, $r_3$ in $P$
    such that for each $i \in [3]$,
    the set $\{p_i, r_i, q_{i + 1}, q_{i + 2}\}$
    is a $d_3$-interval, as shown below.
    \begin{center}
        \begin{tikzcd}[ampersand replacement=\&]
            r_1 \& r_2 \& r_3 \\
            q_3 \& q_2 \& q_1 \\
            p_1 \& p_2 \& p_3
            \arrow[no head, from=2-1, to=1-1]
            \arrow[no head, from=2-1, to=1-2]
            \arrow[no head, from=2-2, to=1-1]
            \arrow[no head, from=2-2, to=1-3]
            \arrow[no head, from=2-3, to=1-2]
            \arrow[no head, from=2-3, to=1-3]
            \arrow[no head, from=3-1, to=2-1]
            \arrow[no head, from=3-1, to=2-2]
            \arrow[no head, from=3-2, to=2-1]
            \arrow[no head, from=3-2, to=2-3]
            \arrow[no head, from=3-3, to=2-2]
            \arrow[no head, from=3-3, to=2-3]
        \end{tikzcd}
    \end{center}
    No $r_i$ may cover an element outside its respective $d_3$-interval,
    so in particular $r_i$ cannot cover $q_i$.
    Thus, the elements $r_1$, $r_2$, and $r_3$ are distinct.
    Also,
    because each set $\{p_i, r_i, q_{i + 1}, q_{i + 2}\}$
    is an interval,
    there are no elements of $P$
    between $q_i$ and $r_j$ for $i \neq j$.
    Thus, for each $i$, the set $\{q_i, r_{i + 1}, r_{i + 2}\}$
    is convex.
    It follows that the set $\{q_1, q_2, q_3, r_1, r_2, r_3\}$
    is a set satisfying the conditions of the lemma,
    which since $r_3 \geq q_1$ contradicts maximality.
\end{proof}

\begin{prop}[{\cite[Section 3]{proctor1999dynkin}}]
    \label{prop:cover_bound}
    Each element of a \dc poset is covered by at most two elements.
\end{prop}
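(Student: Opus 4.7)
The plan is to argue by contradiction using the forbidden configuration of \Cref{lem:forbid_subposet} together with the first two axioms of \Cref{def:d_comp}. Suppose that some element $p\in P$ is covered by three distinct elements $a_1$, $a_2$, $a_3$. Since the $a_i$ all cover $p$, no two of them are comparable, so for each pair $i\neq j$ the three-element set $\{p,a_i,a_j\}$ is convex (nothing strictly separates $p$ from either $a_i$ or $a_j$) and has the shape of $d_3(1)\setminus\{\sup d_3(1)\}$, i.e.\ it is $d_3^-$-convex.

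Next I would apply the first bullet of \Cref{def:d_comp} to each of these three $d_3^-$-convex sets, producing elements $b_{12}$, $b_{13}$, $b_{23}$ of $P$ such that for each pair $i<j$ the set $\{p,a_i,a_j,b_{ij}\}$ is a diamond with $b_{ij}$ on top. The key observation is that these three $b_{ij}$ are pairwise distinct: if, say, $b_{12}=b_{13}$, then this common element would cover $a_1$, $a_2$ and $a_3$ while being the top of the $d_3$-interval $\{p,a_1,a_2,b_{12}\}$, contradicting the second bullet of \Cref{def:d_comp}, which forbids the top of a $d$-interval from covering anything outside that interval.

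Now I would verify that $\{a_1,a_2,a_3,b_{12},b_{13},b_{23}\}$ realizes the forbidden configuration of \Cref{lem:forbid_subposet} with $p_i=a_i$ and $q_i=b_{jk}$ for $\{i,j,k\}=\{1,2,3\}$. Each three-element set $\{a_i, b_{ij}, b_{ik}\}$ consists of $a_i$ covered by two elements of $P$, and those two are incomparable because they sit at the same height above $p$ and are distinct by the previous step. Convexity of each such triple follows from the fact that both covers $a_i\lessdot b_{ij}$ and $a_i\lessdot b_{ik}$ are tight inside their respective diamonds (intervals), so there is nothing strictly between $a_i$ and either top. Hence each triple is $d_3^-$-convex, giving exactly the configuration ruled out by \Cref{lem:forbid_subposet} and producing the desired contradiction.

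The main subtle point, and where I would spend the most care, is the distinctness of the $b_{ij}$ and the convexity verifications; both rely crucially on the second bullet of \Cref{def:d_comp} (top elements of $d$-intervals do not cover anything outside). Everything else is a direct translation between the $d$-completeness axioms and the hypothesis of the preceding lemma.
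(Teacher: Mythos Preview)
Your argument is correct and follows essentially the same route as the paper's proof: produce the three diamond tops above the pairs $\{a_i,a_j\}$ and feed the resulting configuration into \Cref{lem:forbid_subposet}. If anything, you are slightly more explicit than the paper in arguing the pairwise distinctness of the $b_{ij}$ via the second axiom and in checking convexity and incomparability of the tops; the paper asserts the $d_3^-$-convexity of $\{p_i,q_{i+1},q_{i+2}\}$ directly.
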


\begin{proof}
    Suppose element $p$ is covered by more than two elements,
    and let $p_1$, $p_2$, $p_3$ be elements covering $p$.
    Since they cover $p$, the elements $p_1$, $p_2$, and $p_3$ are incomparable.
    Thus for each $i \in [3]$ the set $\{p, p_{i + 1}, p_{i + 2}\}$
    is $d_3^-$-convex,
    where we take indices $\bmod 3$.
    Thus there exist elements $q_1$, $q_2$, and $q_3$
    such that  $\{p, p_{i + 1}, p_{i + 2}, q_i\}$ is a $d_3$-interval
    for $i \in [3]$.
    \begin{center}
        \begin{tikzcd}[ampersand replacement=\&]
            q_3 \& q_2 \& q_1 \\
            p_1 \& p_2 \& p_3 \\
            \& p
            \arrow[no head, from=2-1, to=1-1]
            \arrow[no head, from=2-1, to=1-2]
            \arrow[no head, from=2-2, to=1-1]
            \arrow[no head, from=2-2, to=1-3]
            \arrow[no head, from=2-3, to=1-2]
            \arrow[no head, from=2-3, to=1-3]
            \arrow[no head, from=2-1, to=3-2]
            \arrow[no head, from=2-2, to=3-2]
            \arrow[no head, from=2-3, to=3-2]
        \end{tikzcd}
    \end{center}
    For $i$, $j \in [3]$ with $i \neq j$,
    the element $q_i$ covers $p_j$.
    Thus for $i \in [3]$ the set $\{p_i, q_{i + 1}, q_{i + 2}\}$
    is $d_3^-$-convex,
    contradicting \Cref{lem:forbid_subposet}.
\end{proof}

\begin{prop}[{\cite[Lemma~8.3]{proctor1999minuscule}}]
    \label{prop:d_int_closure}
    A neck element of a $d$-interval
    does not cover any elements outside the $d$-interval,
    and a tail element of a $d$-interval
    is not covered by any elements outside the $d$-interval.
\end{prop}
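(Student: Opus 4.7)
My plan is to prove the two halves of \Cref{prop:d_int_closure} separately, giving a direct argument for necks and an induction on $k$ for tails. Throughout, I denote the $d_k$-interval in question by $[p,z]$, with tail chain $p = t_1 < t_2 < \dots < t_{k-2}$, side elements $w_1, w_2$, and neck chain $n_1 < \dots < n_{k-2} = z$.

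For the neck statement, the key point is that each neck element is the top of some $d$-sub-interval. Specifically, for $1 \le i \le k-2$, the sub-interval $[t_{k-1-i}, n_i]$ of $P$ is convex (every element of $P$ between $t_{k-1-i}$ and $n_i$ lies in $[p,z]$) and has $i$ tails, two sides, and $i$ necks, so it is a $d_{i+2}$-interval with top $n_i$. The second axiom of \Cref{def:d_comp} then immediately shows that $n_i$ covers nothing outside $[t_{k-1-i}, n_i]$, and hence nothing outside $[p,z]$.

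For the tail statement, I would induct on $k$. The base case $k = 3$ is immediate: the unique tail $p$ is covered in $P$ by both sides $w_1$ and $w_2$, so \Cref{prop:cover_bound} rules out any further covers. For $k \ge 4$, the sub-interval $[t_2, n_{k-3}]$ is itself a $d_{k-1}$-interval of $P$ with tails $t_2, \dots, t_{k-2}$, and by the inductive hypothesis none of these is covered by anything outside $[t_2, n_{k-3}] \subseteq [p,z]$. This leaves only the bottom tail $p = t_1$ to handle.

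To handle $p$, I would assume for contradiction that some $x \notin [p,z]$ covers $p$. I first observe that $t_2$ and $x$ must be incomparable: if $x < t_2$ then $p < x < t_2 \le z$ forces $x \in [p,z]$, while if $x > t_2$ then $p < t_2 < x$ contradicts $x$ covering $p$. Convexity of $[p,z]$ in $P$ also ensures that $t_2$ still covers $p$ in $P$, so $\{p, t_2, x\}$ is a $d_3^-$-convex subset of $P$, and the first axiom of \Cref{def:d_comp} produces a $y \in P$ with $\{p, t_2, x, y\}$ a $d_3$-interval; in particular $y$ covers $t_2$. The inductive hypothesis applied to $t_2$ as a tail of $[t_2, n_{k-3}]$ then forces every cover of $t_2$ in $P$ to lie in $[t_2, n_{k-3}] \subseteq [p,z]$, so $y \in [p,z]$. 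But then $p < x < y \le z$ puts $x \in [p,z]$, contradicting the choice of $x$. This last case, where the bottom tail is not reached by the sub-$d_{k-1}$-interval induction and must instead be trapped via a small $d_3$-extension, is the main delicate step of the proof.
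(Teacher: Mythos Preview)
Your argument is correct. For the neck half you are doing exactly what the paper does, only spelled out in full: each neck element $n_i$ is the top of the sub-$d_{i+2}$-interval $[t_{k-1-i},n_i]$, so the second axiom of \Cref{def:d_comp} applies directly. The paper compresses this to one sentence.

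For the tail half you take a genuinely different route. The paper argues by choosing a \emph{globally maximal} counterexample $p$ (over all tail elements of all $d$-intervals in $P$); it then uses \Cref{prop:cover_bound} to force $p$ to be a strict tail, builds the diamond $\{p,p',x,x'\}$, observes that convexity of $I$ forces $x'\notin I$, and concludes that $p'>p$ is another counterexample, contradicting maximality. Your proof instead inducts on $k$: you peel off the sub-$d_{k-1}$-interval $[t_2,n_{k-3}]$ to handle $t_2,\dots,t_{k-2}$ at once, and for the remaining element $t_1$ you build the same diamond but use the inductive hypothesis on $t_2$ to force the new vertex $y$ \emph{inside} $[t_2,n_{k-3}]\subseteq[p,z]$, hence trapping $x$ inside $[p,z]$. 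The two arguments are in a sense dual---the paper shows the new vertex stays \emph{outside} to climb higher, you show it stays \emph{inside} to finish---and both hinge on the same $d_3^-$ completion step. The paper's version is a line or two shorter and avoids naming the nested sub-intervals; your version has the mild advantage of being local to the given $d_k$-interval rather than quantifying over all $d$-intervals of $P$ at once.
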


\begin{proof}
    By \Cref{def:d_comp}, a neck element of a $d$-interval
    does not cover any elements outside the $d$-interval.
    Now suppose for the sake of contradiction that
    there exists a tail element of some $d$-interval $I$ covered by some $x \notin I$.
    Let $p$ be a maximal such element in $P$,
    and let $p$ be the tail element of $d$-interval $I$
    with side elements $w$ and $z$,
    and let $x$ be an element not in $I$ covering $p$.
    By \Cref{prop:cover_bound}, $p$ must be a strict tail element of $I$
    since otherwise $p$ would be covered by $w$, $z$, and $x$.
    Thus, there is a unique element $p'$ of $I$ covering $p$,
    and the elements $\{p, p', x\}$ form a $d_3^-$-convex set.
    Thus there must exist an element $x'$ covering $p'$ and $q$,
    and $x'$ cannot be in $I$ because $I$ is convex and $x \notin I$.
    This contradicts the maximality of $p$.
\end{proof}

\begin{prop}
    \label{prop:d_int_unique}
    If an element $p$ of $P$ is the maximal element of some $d$-interval,
    then $p$ is the maximal element of a \emph{unique} $d$-interval $I$,
    and any $d$-interval having $p$ as a neck element
    contains $I$.
    Similarly, if $p$ is the minimal element of some $d$-interval,
    then $p$ is the minimal element of a \emph{unique} $d$-interval $I$,
    and any $d$-interval having $p$ as a tail element
    contains $I$.
\end{prop}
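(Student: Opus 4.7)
I would prove only the statement about $p$ being a maximum; the dual statement about minima follows by a parallel argument.

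The idea is to classify $d$-intervals with $p$ as maximum by the set of elements $p$ covers in $P$. By \Cref{prop:d_int_closure}, if $I$ is a $d$-interval with $p$ as max, then $p$ covers no elements outside $I$; and inside $I \cong d_k(1)$, the top $p$ covers either the two sides (if $k = 3$) or only the top neck (if $k \geq 4$). Hence $p$ covers exactly one or two elements of $P$, and the count alone determines whether $I$ is $d_3$ or $d_{\geq 4}$.

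When $p$ covers two elements $a, b$: any such $I$ is a $d_3$-interval with sides $\{a, b\}$ and a tail lying below both. The third condition of \Cref{def:d_comp} forbids two $d_3^-$-convex sets of the form $\{a, b, t\}$ and $\{a, b, t'\}$, so the tail is uniquely determined. For any $d$-interval $J$ having $p$ as a neck element, \Cref{prop:d_int_closure} applied to $J$ forces $\{a, b\} \subseteq J$; since a strict neck of $J$ covers only one element of $J$, $p$ must be the bottom neck of $J$ with sides $\{a, b\}$, and the same axiom pins the top tail of $J$ to be the $t$ above. Thus $I$ is unique and $I \subseteq J$.

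When $p$ covers a single element $q$: any such $I$ is $d_k$ for some $k \geq 4$ with top neck $q$. I would proceed by induction on the number of elements of $P$ strictly below $p$. The key observation is that $q$ is itself the maximum of some $d$-interval (by \Cref{prop:d_int_closure} the covers of $q$ in $P$ are those of $q$ in $I$, of which there are at most two), and the set of elements below $q$ is a proper subset of that below $p$, so by induction the unique $d$-interval $I_q$ with $q$ as maximum exists and is contained in every $d$-interval having $q$ as a neck. A direct check on the $d_k(1)$-structure shows that $I \setminus \{p\}$ is obtained from $I_q$ by adjoining a single new bottom tail element $m$; uniqueness of $m$ follows from the third condition of \Cref{def:d_comp} applied to $I_q \cup \{m\}$ versus $I_q \cup \{m'\}$. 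For any $J$ with $p$ as a neck, $p$ must be a strict neck of $J$ (else $p$ would cover two elements of $J$) with $q$ the neck directly below $p$; the inductive hypothesis gives $I_q \subseteq J$, and reapplying the third condition of \Cref{def:d_comp} inside $J$ places $m$ into $J$'s tail chain. Thus $I = I_q \cup \{p, m\}$ is uniquely determined and contained in $J$.

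The main technical obstacle is this last inductive step: verifying that $I \setminus \{p\}$ extends $I_q$ by \emph{exactly} one bottom tail element (so that the induction step actually constructs $I$ from $I_q$), which requires matching the $d_{k-1}$-structure of $I_q$ with the $d_k$-structure of $I$, and then tracking this unique extension through the containment argument by iterated appeals to the third condition of \Cref{def:d_comp}.
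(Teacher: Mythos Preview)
Your argument for the maximal-element half is correct and takes a genuinely different route from the paper. The paper argues by minimal counterexample: pick $q$ minimal among elements maximal in two distinct $d$-intervals, use the second axiom to show the two intervals agree on everything $q$ covers, and descend to a smaller counterexample until one reaches two diamonds, where the third axiom gives the contradiction. Your approach instead inducts on the size of the downset of $p$, reconstructing the unique $d$-interval with top $p$ as $I_q\cup\{p,m\}$ from the (inductively unique) $I_q$ and a single new tail element $m$ pinned down by the third axiom. Your version is more constructive and proves uniqueness and the containment claim in one pass; the paper's is terser but treats containment as a one-line corollary of uniqueness afterward.

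There is, however, a real gap in dismissing the minimal-element half as a ``parallel argument.'' The \dc axioms are not self-dual: the third axiom of \Cref{def:d_comp} forbids two $d^-$-convex sets (a $d$-interval with its \emph{maximum} removed) that differ only in their minimum, and there is no companion axiom about $d$-intervals with their \emph{minimum} removed differing only in their maximum. In your max argument, the uniqueness of the new bottom tail $m$ came straight from this third axiom. The dual step---showing that the new \emph{top} element above $I_q\cup\{p\}$ is unique---cannot be obtained the same way, because $I_q\cup\{p\}$ is a $d$-interval minus its minimum, not a $d^-$-convex set. The paper handles the minimum case by a separate maneuver: after reducing to two diamonds $[p,q]$ and $[p,q']$ sharing side elements $w,z$, it observes that $\{w,q,q'\}$ and $\{z,q,q'\}$ are $d_3^-$-convex sets differing only in their minima, and \emph{that} is what contradicts the third axiom. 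You would need to supply some analogous device (or appeal to the already-proved maximum case in a nontrivial way) to close your inductive step for minima.
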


\begin{proof}
    Suppose, for the sake of contradiction,
    that $P$ contains some element which is maximal in multiple $d$-intervals,
    and let $q$ be minimal among such elements.
    Let $[p, q]$ and $[p', q]$ be two distinct $d$-intervals.
    By \Cref{def:d_comp},
    the intervals $[p, q]$ and $[p', q]$ cannot differ
    in only their minimal elements,
    so we have $[p, q] \setminus \{p\} \neq [p', q] \setminus \{p'\}$.
    Also by \Cref{def:d_comp},
    the elements of $P$ covered by $q$
    must be in both $[p, q]$ and $[p', q]$.
    In particular, $q$ is a strict neck element of $[p, q]$
    if and only if it is a strict neck element of $[p', q]$;
    in this case, letting $q'$ be the unique element covered by $q$,
    we have distinct $d$-intervals
    $[p, q'] \setminus \{p\}$ and $[p', q'] \setminus \{p'\}$,
    contradicting the minimality of $q$.
    Thus $[p, q]$ and $[p', q]$ are diamonds
    differing only in their minimal elements,
    contradicting \Cref{def:d_comp}.
    The statement about $d$-intervals containing $q$ as a neck element follows;
    such intervals contain a $d$-interval
    with $q$ as maximal neck element,
    and thus contain the unique such interval.

    Now suppose, for the sake of contradiction,
    that $P$ contains elements which are minimal in multiple $d$-intervals,
    and let $p$ be maximal among such elements.
    Let $[p, q]$ and $[p, q']$ be two distinct $d$-intervals.
    By \Cref{prop:d_int_closure},
    the elements of $P$ covering $p$
    are contained in both $[p, q]$ and $[p, q']$.
    In particular, $p$ is a strict tail element of $[p, q]$
    if and only if it is a strict tail element of $[p, q']$,
    in which case letting $p'$ be the unique element covering $p$
    gives distinct $d$-intervals
    $[p', q] \setminus \{q\}$ and $[p', q'] \setminus \{q'\}$,
    contradicting the maximality of $p$.
    Thus $[p, q]$ and $[p, q']$ are diamonds
    sharing side elements $w$ and $z$.
    Then $\{w, q, q'\}$ and $\{z, q, q'\}$ are $d_3^-$-convex sets
    differing only in their minimal elements,
    contradicting \Cref{def:d_comp}.
    As before,
    the statement about $d$-intervals containing $p$ as a neck element follows.
\end{proof}

In light of \Cref{prop:d_int_unique}, we can make the following definition: 
\begin{definition}
    Let $p$ be an element of \dc poset $P$.
    We define $p^\uparrow$ to be the unique element such that
    $[p, p^\uparrow]$ is a $d$-interval, if such $p^\uparrow$ exists. 
    Similarly, we define $p^\downarrow$ to be the unique element such that
    $[p^\downarrow, p]$ is a $d$-interval, if such $p^\downarrow$ exists.
    By definition,
    $p$ is in the same diagonal as $p^\uparrow$ and $p^\downarrow$.
\end{definition}

As a consequence of \Cref{prop:d_int_unique},
we see that hook vectors are well-defined.

\begin{corollary}
    \label{cor:hook}
    The notion of hook vectors given in \Cref{def:hook} is well-defined.
\end{corollary}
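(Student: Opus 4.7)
The plan is to prove \Cref{cor:hook} by well-founded induction on the poset $P$, after first checking that the two clauses of \Cref{def:hook} are mutually exclusive and jointly exhaustive.

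The first step is to verify that ``$p$ is the maximum of some $d$-interval'' (the hypothesis of the second clause) is the negation of ``$p$ is not a neck element of any $d$-interval'' (the hypothesis of the first clause). One direction is immediate: the maximum of any $d$-interval is its top neck element, so whenever the second clause applies, the first cannot. For the converse, I claim that every neck element of a $d$-interval is itself the maximum of some (possibly smaller) $d$-interval. Explicitly, if $p$ is the $i$-th neck from the bottom of a $d_k$-interval $[a, b]$ of $P$, then inside $[a, b] \cong d_k(1)$ one can identify a sub-$d$-interval isomorphic to $d_{i+2}(1)$, consisting of the bottom $i$ neck elements, the two side elements, and the top $i$ tail elements of $[a,b]$. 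This subset is an interval of $P$ with $p$ as its maximum; its convexity in $P$ follows from the convexity of the ambient $d$-interval $[a,b]$ in $P$.

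Next, when the second clause of \Cref{def:hook} applies, \Cref{prop:d_int_unique} ensures that the $d$-interval $[p', p]$ is unique, so the elements $p'$, $w$, and $z$ are determined by $p$ alone. Thus the formula $\bh_P(p) = \bh_P(w) + \bh_P(z) - \bh_P(p')$ gives an unambiguous value depending only on the hook vectors of elements strictly below $p$.

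Well-definedness of $\bh_P$ then follows by well-founded induction on the finite poset $P$. When $p$ is not a neck element of any $d$-interval, the first clause specifies its hook vector directly (no recursion needed); this in particular handles all minimal elements of $P$. When $p$ is the maximum of some $d$-interval, the second clause refers only to $\bh_P(w)$, $\bh_P(z)$, $\bh_P(p')$ for $w, z, p' < p$, which are already well-defined by the inductive hypothesis. The main obstacle is the combinatorial construction in the first step: extracting an explicit sub-$d$-interval with prescribed maximum from the internal structure of $d_k(1)$, and verifying that the resulting subset remains convex when viewed inside $P$ rather than just inside the ambient $d$-interval.
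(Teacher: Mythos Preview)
Your proof is correct and follows essentially the same approach as the paper: invoke \Cref{prop:d_int_unique} to see that the data $p',w,z$ in the recursive clause are uniquely determined, and then conclude by well-founded induction. Your explicit verification that the two clauses are mutually exclusive and exhaustive --- by exhibiting, for any neck element $p$ of a $d_k$-interval, a sub-$d$-interval of which $p$ is the maximum --- is a careful point that the paper's proof leaves implicit.
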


\begin{proof}
    If $p$ is not a neck element of a $d$-interval,
    then $\bh_P(p)$ is clearly well-defined.
    If $p$ is a neck element of some $d$-interval,
    then by \Cref{prop:d_int_unique}
    there is a unique $p' \in P$
    such that $[p', p]$ is a $d$-interval,
    so the elements $p'$, $w$, and $z$ in \Cref{def:hook}
    are uniquely determined.
\end{proof}

Finally, we collect several facts about the diagonals of a \dc $P$. 

\begin{prop}
    \label{prop:diag_props}
    The following statements hold for diagonals of $P$:
    \begin{enumerate}[label=\numbers]
        \item \label{dprop:chain}
            The elements of a diagonal $D$ form a chain
            under the relation of $P$,
            with all covering relations given by $p \lessdot p^\uparrow$;
        \item Elements of a single diagonal $D$ cannot be adjacent;
            \label{dprop:not_adj}
        \item If $P'$ is an upper set of $P$,
            then two elements of $P'$ are in the same diagonal of $P'$
            if and only if they are in the same diagonal of $P$.
            Thus we may identify diagonals of $P$ with diagonals of $P'$;
            \label{dprop:up_id}
        \item If $C$, $D$ are adjacent diagonals
            such that the minimal element of $C$ is minimal in $P$,
            then each element of $D$ is adjacent
            to an element of $C$;
            \label{dprop:adj}
        \item Two diagonals having nonempty intersection
            with upper set $P'$
            are adjacent in $P'$ if and only if they are adjacent in $P$;
            \label{dprop:up_adj}
        \item If $C$, $D$ are adjacent diagonals of $P$
            with minimal elements $c$ and $d$, respectively,
            then at most one of $c$ and $d$ is minimal in $P$.
            \label{dprop:min}
    \end{enumerate}
\end{prop}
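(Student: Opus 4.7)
Items (1)--(3) follow directly from the definitions together with \Cref{prop:d_int_unique}. For (1), the equivalence $\sim$ is generated by the elementary relations $p \sim p^\uparrow$ (equivalently $p \sim p^\downarrow$), and by \Cref{prop:d_int_unique} the maps $\uparrow$ and $\downarrow$ are mutually inverse bijections where defined; iterating then shows the diagonal of $p$ equals $\{\ldots, p^{\downarrow\downarrow}, p^\downarrow, p, p^\uparrow, p^{\uparrow\uparrow}, \ldots\}$, a chain whose covering relations are $p \lessdot p^\uparrow$ (any intermediate element would lie in the $d$-interval $[p, p^\uparrow]$, but the only diagonal-mates of $p$ in that interval are $p$ and $p^\uparrow$). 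For (2), the $d$-interval $[p, p^\uparrow]$ contains at least four elements, with side elements strictly between $p$ and $p^\uparrow$, so $p^\uparrow$ does not cover $p$ in $P$; elements farther apart in the diagonal are even farther apart in $P$. For (3), an interval $[p, q]$ with $p, q \in P'$ is computed identically in $P$ and $P'$ (since $P'$ is upper), hence is a $d$-interval of $P$ iff of $P'$; a $\sim$-generator chain between $p, q \in P'$ can be taken inside $D \cap P'$ by (1), so the equivalence relations agree on $P'$.

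The main work is (4). My plan is to start from any initial adjacent pair $(c_i, d_j)$ guaranteed by $C \sim D$ and propagate along both chains, producing an adjacency at every index of $D$. The key computation: if $d_j \gtrdot c_i$ and $j \geq 1$, then \Cref{prop:d_int_closure} places $c_i$ inside the $d$-interval $[d_{j-1}, d_j]$, and a split on its $d_k$-type yields a new adjacency either of the form $(c_i, d_{j-1})$ when $k = 3$ (then $c_i$ is a side element covering $d_{j-1}$) or of the form $(c_{i-1}, d_{j-1})$ when $k \geq 4$ (then $c_i$ is the strict neck just below $d_j$, and the nested $d_{k-1}$-sub-interval forces $c_{i-1}$ to be the top tail element of $[d_{j-1}, d_j]$, which covers $d_{j-1}$). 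The dual case $c_i \gtrdot d_j$ uses $[c_{i-1}, c_i]$, and up-propagation is analogous; an auxiliary lemma, proved by noting that $\{c_i\} \cup [d_j, d_{j+1}]$ is $d_{k+1}^-$-convex in the obstructive configuration and invoking the completion axiom, asserts that up-propagation never stalls because $c_{i+1}$ fails while $d_{j+1}$ still exists. Crucially, the minimality of $c_0$ in $P$ blocks propagation from ever requiring $c_{-1}$: at $(c_0, d_j)$ with $j \geq 1$, $c_0 \in [d_{j-1}, d_j]$ would force $d_{j-1} \leq c_0 = \min P$, so $d_{j-1} = c_0$, impossible as $C \cap D = \emptyset$. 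Propagation therefore reaches an adjacency at every index of $D$. The main obstacle is the extensive case-bookkeeping for the propagation rules.

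Items (5) and (6) follow as consequences. For (5), one direction is immediate since covers in $P'$ remain covers in $P$; for the converse, the up-propagation from (4) strictly increases each coordinate $i$ and $j$ in alternating steps (A.1 increments $i$ and switches orientation, B.1 increments $j$ and switches orientation, while A.2 and B.2 increment both), so from any adjacent pair in $P$ we eventually reach one whose coordinates exceed the $P'$-thresholds on both chains, which are finite thanks to $D_1 \cap P', D_2 \cap P' \neq \emptyset$. For (6), suppose both $c = \min C$ and $d = \min D$ are minimal in $P$; applying (4) to $C$ gives some $c_i$ adjacent to $d$, and minimality of $d$ forces $c_i \gtrdot d$, whence $i \geq 1$ (since $c_0 = c$ covers nothing, being minimal). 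Then $d \in [c_{i-1}, c_i]$ by \Cref{prop:d_int_closure} gives $c_{i-1} \leq d = \min P$, forcing $c_{i-1} = d$, contradicting $C \cap D = \emptyset$.
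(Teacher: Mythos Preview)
Your argument is essentially correct and takes a genuinely different route from the paper for the substantive items (4) and (5).

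For (4), the paper argues by contradiction: it lets $d$ be the minimal element of $D$ not adjacent to anything in $C$, shows via a case analysis on the position of the $C$-neighbor of $d^\downarrow$ relative to $[d^\downarrow,d]$ that $d$ must be minimal in $D$, and then takes a minimal adjacent pair $(d',c')$ to force $c'$ minimal in $C$ (hence in $P$), contradicting $d' \neq d$. Your approach is constructive: you take any adjacent pair and propagate along the two chains, using \Cref{prop:d_int_closure} to trap one element inside the other's $d$-interval at each step. Your auxiliary lemma (completion of a $d_{k+1}^-$-convex set) cleanly handles the only obstruction to upward propagation, and the minimality of $c_0$ in $P$ is exactly what rules out ever needing $c_{-1}$ downward. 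Both approaches work; yours makes the mechanism more explicit at the cost of heavier case-tracking, while the paper's minimality argument is terser but hides the same case analysis inside its Claim.

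For (5), the paper introduces an auxiliary upper set $P''$ generated by $(C \cup D) \cap P'$, applies (4) inside $P''$, and then patches the one case where the resulting $C$-neighbor falls outside $P'$. You instead reuse the upward propagation from (4) directly; this works because your upward step does not use the hypothesis that $\min C$ is minimal in $P$, and propagation only halts at the top of $D$ (and, if needed, after one more $k'=3$ step, at a $C$-element above it), which automatically lands both coordinates in $P'$.

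One small slip: in your $k \geq 4$ case for down-propagation you call $c_{i-1}$ ``the top tail element'' of $[d_{j-1},d_j]$. That is only literally correct when $k = 4$; in general the nested $d_{k-1}$-interval $[c_{i-1},c_i]$ has bottom $c_{i-1}$ equal to the tail element \emph{immediately above} $d_{j-1}$, not the highest tail. Your conclusion $c_{i-1} \gtrdot d_{j-1}$ is still correct, so this is terminology, not a gap. Items (1)--(3) and (6) match the paper's arguments.
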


\begin{proof}
    \begin{enumerate}[label=\numbers]
        \item This follows from \Cref{prop:d_int_unique}.
        \item Suppose for the sake of contradiction that $p$, $q$
            are elements of $D$
            with $q$ covering $p$ in $P$.
            Then $q$ also covers $p$ in the subposet $D$,
            so, by statement~\ref{dprop:chain},
            $[p, q]$ is a $d$-interval of two elements,
            a contradiction.
        \item It is immediate from the definition that if two elements of $P'$
            are in the same diagonal of $P'$,
            then they are in the same diagonal of $P$,
            so we only prove the converse.
            Let $p < q$ be two elements of $P'$ in the same diagonal of $P$,
            so that by statement~\ref{dprop:chain}
            there exists a sequence $p = p_0 < \dots < p_n = q$
            of elements of $P$
            such that each $[p_{i - 1}, p_i]$ is a $d$-interval.
            Each element of each $[p_{i - 1}, p_i]$ dominates $p$ in $P$,
            and thus is in $P'$,
            so $p$ and $q$ are in the same diagonal of $P'$.
        \item Suppose for the sake of contradiction that not all elements of $D$
            are adjacent to an element of $C$.
            Let $d$ be the minimal element of $D$ that is not.

            \textit{Claim:} $d$ is minimal in $D$.
            Suppose for the sake of contradiction that $d^\downarrow$ exists.
            Then $d^\downarrow$ is adjacent to some element $c$ of $C$
            by assumption of minimality.
            We consider the possible positions of $c$
            with respect to the $d$-interval $[d^\downarrow, d]$.
            
            If $c$ is a side element of $[d^\downarrow, d]$,
            then $d$ is adjacent to $c$, a contradiction.
            If $c$ is contained in $[d^\downarrow, d]$
            but is not a side element,
            then $c$ is a tail element dominating $d^\downarrow$,
            and so $c^\uparrow$ exists and is dominated by $d$.
            In this case $d$ is adjacent to $c^\uparrow$,
            again a contradiction.
            
            Finally, if $c$ is not in $[d^\downarrow, d]$,
            then $d^\downarrow$ must dominate $c$ because
            by \Cref{prop:d_int_closure} elements outside $[d^\downarrow, d]$
            cannot dominate $d^\downarrow$.
            If $S = [d^\downarrow, d] \sqcup \{c\}$ is $d^-$-convex,
            then $c^\uparrow$ exists and dominates $d$,
            whence again $d$ is adjacent to $c^\uparrow$, a contradiction.
            On the other hand, if $S$ is not $d^-$-convex,
            then there is some element $x$ outside of $S$
            covering $c$ and dominated by an element of $S$.
            Now $\{c, d^\downarrow, x\}$ is $d_3^-$-convex,
            so there exists $y$ covering $d^\downarrow$ and $x$ such that
            $[c, y]$ is a $d_3$-interval.
            Now since $y$ covers $d^\downarrow$,
            by \Cref{prop:d_int_closure} it must be in $[d^\downarrow, d]$
            and thus either $y$ or $y^\uparrow$ is covered by $d$,
            depending on whether $y$ is a side element or a tail element.
            Also, $y$ is in $C$, and thus $y^\uparrow$ is as well,
            and so again $d$ is adjacent to an element of $C$,
            a contradiction.
            This finishes the proof that $d$ is minimal in $D$.
            \qedd

            Now let $d'$, $c'$ be adjacent elements of $D$ and $C$, respectively,
            such that $(d', c')$ is minimal in the product poset $P \times P$.
            Such a pair exists because $D$ and $C$ are adjacent diagonals.
            We have $d' \neq d$
            and $d$ is minimal in $D$,
            so the element $d'^\downarrow$ exists.
            By assumption of minimality of $(d', c')$,
            the element $d'^\downarrow$ cannot be adjacent to $c'$.
            Since $c'$ is adjacent to $d'$ but not $d'^\downarrow$,
            the element $c'$ is either a neck element of $[d'^\downarrow, d']$
            or covers $d'$.
            Thus if $c'^\downarrow$ exists,
            then by \Cref{prop:d_int_closure,prop:d_int_unique}
            $[c'^\downarrow, c']$ either is contained in or contains
            $[d'^\downarrow, d']$,
            whence $c'^\downarrow$ is adjacent to $d'^\downarrow$,
            contradicting the minimality of $(d', c')$.
            So $c'$ is minimal in $C$, and thus in $P$
            by the assumption on $C$.

            It follows that $d'$ covers a minimal element in $P$,
            and thus is minimal in $D$, a contradiction.
        \item Let the two diagonals be $C$ and $D$.
            If $C$ and $D$ are adjacent in $P'$,
            then by definition they are adjacent in $P$,
            so we have only to prove the other direction.
            Suppose $C$ and $D$ are adjacent in $P$.
            
            Consider the upper set $P''$ consisting of all elements of $P$
            which dominate some element of $C \cap P'$ or $D \cap P'$.
            The set $P' \setminus P''$ cannot contain any elements
            of $C$ or $D$.
            Also, any minimal element of $P''$ is in $C$ or $D$.
            Without loss of generality, let an element of $C$ be minimal in $P''$.
            Then by \ref{dprop:adj}, any element $d$ of $D \cap P'$ is adjacent
            to an element $c$ of $C \cap P'$.
            Suppose $c \notin P'$,
            in which case we must have $d \gtrdot c$.
            Since by assumption $P'$ contains elements of $C$,
            there must exist elements of $C$ dominating $c$.
            Thus $c^\uparrow$ exists,
            and must dominate $d$,
            whence $c^\uparrow \in P'$.
            Also, by \Cref{prop:d_int_closure},
            we have $d \in [c, c^\uparrow]$.
            If $[c, c^\uparrow]$ is a diamond,
            then $c^\uparrow$ is adjacent to $d$.
            Otherwise, $c^\uparrow \gtrdot d^\uparrow$.
            Either way, $C$ and $D$ are adjacent in $P'$.
        \item This follows from statement \ref{dprop:adj};
            if $c$ and $d$ were both minimal,
            each would have to be adjacent to an element
            of the other diagonal.
            Since an element covering a minimal element of $P$
            cannot be in the neck of a $d$-interval
            and thus is minimal in its diagonal,
            this implies that one of $c$ and $d$ is not minimal in $P$,
            a contradiction.
    \end{enumerate}
\end{proof}

\section{Defining RSK for \dc posets}
\label{sec:rsk}

As described in \Cref{subsec:rsk_sketch},
Pak~\cite{pak2001hook} conceives of RSK as a bijection from fillings
of Young diagrams by nonnegative real numbers to fillings of Young
diagrams with nonnegative real numbers that increase from left to
right and top to bottom. We will define RSK for \dc posets
similarly.

\begin{definition}
    \label{def:filling}
    A \emph{filling} of $P$ is a vector $[t_p]_{p \in P}$
    of nonnegative reals indexed by the elements of $P$.
    We say that a filling
    $[s_p]_{p \in P}$ is \emph{order-reversing}
    if $p \leq q$ implies that $s_p \geq s_q$.
    We will denote the set of fillings of $P$ by $\Ff_P$
    and the set of order-reversing fillings of $P$ by $\Ff'_P$.
\end{definition}

RSK can be described as a composition of certain \emph{toggle
operations}, first defined by Berenstein and Kirillov~\cite{kirillov_berenstein}.
We define toggle operations in the \dc setting.

\begin{definition}[Toggle operations]
    Let $[s_p]_{p \in P}$ be a filling of a \dc poset $P$.
    \emph{Toggling} at an element $p \in P$ is defined as follows.
    Of the elements covering $p$, let $x$ be such that $s_x$ is maximal;
    of the elements covered by $p$, let $y$ be such that $s_y$ is minimal. 
    If $p$ is not covered by any elements, we set $s_x = 0$,
    and similarly for $s_y$.
    We then change the label of $p$ to $s_p' = s_x + s_y - s_p$.
\end{definition}

When $P$ corresponds to a Young diagram, toggle operations are exactly
the toggle operations defined by Berenstein and Kirillov on Young
tableaux.

\begin{example}
    Consider a toggle operation at the bolded central element
    of the filling below.
    \begin{center}
        \begin{tikzcd}
            && 1 \\
            & 3 && 4 \\
            3 && \pmb{5} && 6 \\
            & 6 && 7 \\
            && 8
            \arrow[no head, from=1-3, to=2-2]
            \arrow[no head, from=1-3, to=2-4]
            \arrow[no head, from=2-2, to=3-1]
            \arrow[no head, from=2-2, to=3-3]
            \arrow[no head, from=2-4, to=3-3]
            \arrow[no head, from=2-4, to=3-5]
            \arrow[no head, from=3-1, to=4-2]
            \arrow[no head, from=3-3, to=4-2]
            \arrow[no head, from=3-3, to=4-4]
            \arrow[no head, from=3-5, to=4-4]
            \arrow[no head, from=4-2, to=5-3]
            \arrow[no head, from=4-4, to=5-3]
            \end{tikzcd}
    \end{center}
    We have $s_x = 4$, $s_y = 6$,
    and $s'_p = 4 + 7 - 5 = 6$.
\end{example}

\newcommand\RSK{\textsf{RSK}}

We now define $\RSK_P$ as a bijection
from fillings $\Ff_P$ to order-reversing fillings $\Ff'_P$.

\begin{definition}[RSK]
   Let $[t_p]_{p \in P}$ be a filling of a \dc $P$.
   We define $\RSK_P([t_p]_{p \in P})$ recursively.
   If $P$ has only one element, $\RSK_P$ is the identity map.
   Otherwise, pick a linear extension $p_{1} > p_{2} \dots > p_{n}$ of $P$,
   and let $P^{(j)} = \{p_i : i\leq j\}$.
   Suppose inductively that we have defined
   $\RSK_{P^{(j)}}([t_p]_{p \in P^{(j)}}) = [s_p]_{p \in P^{(j)}}$;
   we wish to define
   $\RSK_{P^{(j + 1)}}([t_p]_{p \in P^{(j + 1)}})$.
   To do so, we first label $p_i$ for $i \leq j$ with $s_{p_i}$
   and $p_{j+1}$ with $-t_{p_{j+1}}$.
   Then, we toggle every element in the diagonal of $p_{j+1}$
   (because by \Cref{prop:diag_props}\ref{dprop:not_adj}
       elements of $D(p_{j + 1})$ are not adjacent,
   this is well-defined without specifying an order for the toggles).
   The result is $\RSK_{P^{(j + 1)}}([t_p]_{p \in P^{(j + 1)}})$,
   and we can continue until we have defined $\RSK_P([t_p]_{p \in P})$. 
\end{definition}

This definition relies on a choice of linear extension of $P$,
but we will show shortly (\Cref{prop:rsk_well_def})
that this choice does not matter. 

\begin{example}
    Let us consider how RSK acts on the following filling
    with the left neck element preceding the right neck element
    in the linear ordering: 
    \begin{center}
        \begin{tikzcd}
            & 1 \\
            & 2 \\
            3 && 4 \\
            & 2 \\
            & 2
            \arrow[no head, from=1-2, to=2-2]
            \arrow[no head, from=2-2, to=3-3]
            \arrow[no head, from=3-1, to=2-2]
            \arrow[no head, from=3-1, to=4-2]
            \arrow[no head, from=4-2, to=3-3]
            \arrow[no head, from=4-2, to=5-2]
            \end{tikzcd}
    \end{center}
    \begin{center}
        \begin{tikzcd}
            {\text{Step 1}} && {\text{Step 2}}
            && {\text{Step 3}} && {\text{Step 4}} && {} \\
            1 && 1 && 1 && 1 \\
            && 3 && 3 && 3 \\
            &&& 6 && 6 && 7
            \arrow[no head, from=2-3, to=3-3]
            \arrow[no head, from=2-5, to=3-5]
            \arrow[no head, from=2-7, to=3-7]
            \arrow[no head, from=3-5, to=4-4]
            \arrow[no head, from=3-7, to=4-6]
            \arrow[no head, from=3-7, to=4-8]
        \end{tikzcd}
    \end{center}
    Considering $P^{(i)}$ for $i \leq 4$ is simple, because each element is in its own diagonal. We start by labeling $p_1$ with $1$. Then, we add $p_2$, label it with $-2$, and toggle its value to $3 = 1-(-2)$. We add $p_3$, label it with $-3$, and toggle its value to $3-(-3)$. We add $p_4$, label it with $-4$, and toggle its value to $3-(-4)$. 
    \begin{center}
        \begin{tikzcd}
	& {\text{Step 5}} &&& {\text{Step 6}} \\
	& 1 &&& 3 \\
	& 4 &&& 4 \\
	6 && 7 & 6 && 7 \\
	& 9 &&& 9 \\
	&&&& 11
	\arrow[no head, from=2-2, to=3-2]
	\arrow[no head, from=3-2, to=4-1]
	\arrow[no head, from=3-5, to=2-5]
	\arrow[no head, from=3-5, to=4-6]
	\arrow[no head, from=4-1, to=5-2]
	\arrow[no head, from=4-3, to=3-2]
	\arrow[no head, from=4-4, to=3-5]
	\arrow[no head, from=4-6, to=5-5]
	\arrow[no head, from=5-2, to=4-3]
	\arrow[no head, from=5-5, to=4-4]
	\arrow[no head, from=6-5, to=5-5]
        \end{tikzcd}
    \end{center}
    When we add $p_5$, we set its value to $-2$. We toggle its value to $7-(-2) = 9$, but we also toggle the other element in its diagonal from $3$ to $1+6-3 = 4$. 
    Finally, when we add $p_6$, we set its value to $-2$. Its value gets toggled to $9-(-2)$, and the value of its other diagonal element gets toggled from $1$ to $4-1 = 3$. 
\end{example}

\begin{remark}
    When the poset element $y$ exists in the toggle operation,
    we have $s'_p = s_x + s_y - s_p$,
    which if $s_x \leq s_p \leq s_y$
    satisfies $s_x \leq s'_p \leq s_y$.
    The only step of RSK when $s_y$ does not exist
    is when we toggle $c$,
    in which case $s'_c = s_x + t_c \geq s_x$.
    Thus $\RSK_P$ maps $\Ff_P$ into $\Ff'_P.$
    Also, toggle operations are reversible,
    and since $\RSK_P$ consists of repeated applications of toggle operations,
    $\RSK_P$ is indeed a bijection $\Ff_P \to \Ff'_P$.
\end{remark}

We now verify that $\RSK_P$ is well-defined.

\begin{prop}
    \label{prop:rsk_well_def}
    The output of $\RSK_P$ does not depend 
    on the choice of linear extension of $P$. 
\end{prop}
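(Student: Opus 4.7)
The plan is to reduce to invariance under a single adjacent transposition of the linear extension, then exploit a structural fact about minimal elements in adjacent diagonals.

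Any two linear extensions of $P$ are related by a sequence of adjacent transpositions, each swapping two consecutive elements $p_j, p_{j+1}$ that are incomparable in $P$. Since $P^{(j-1)}$ and $P^{(j+1)}$ are identical under either ordering, it suffices to show that processing $p_j$ then $p_{j+1}$ and processing $p_{j+1}$ then $p_j$ yield the same labeling of $P^{(j+1)}$. Writing $Q = P^{(j-1)}$, $a = p_j$, and $b = p_{j+1}$, one checks that both $a$ and $b$ are minimal elements of the \dc poset $Q \cup \{a, b\}$ (an upper set of $P$, hence itself \dc).

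The crux is to show that the diagonals $D(a)$ and $D(b)$ are not adjacent in $Q \cup \{a, b\}$. First, by part~\ref{dprop:chain} of \Cref{prop:diag_props}, elements of a single diagonal form a chain, so the incomparable elements $a$ and $b$ lie in distinct diagonals. Since $a$ and $b$ are minimal in $Q \cup \{a, b\}$, they are in particular the minimal elements of $D(a) \cap (Q \cup \{a, b\})$ and $D(b) \cap (Q \cup \{a, b\})$, respectively. Applying part~\ref{dprop:min} of \Cref{prop:diag_props} to the \dc poset $Q \cup \{a, b\}$ then forces non-adjacency, for otherwise at most one of $a, b$ could be minimal in $Q \cup \{a, b\}$.

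Once non-adjacency is established, commutativity of the two processing steps is immediate. The toggle at an element $q$ depends only on the current label at $q$ and on labels of its cover-relation neighbors; by part~\ref{dprop:not_adj} of \Cref{prop:diag_props} and the non-adjacency of $D(a), D(b)$, no element of $D(a)$ is adjacent in $Q \cup \{a, b\}$ to any element of $D(b)$. Thus the toggles performed when processing $a$ modify only labels on $D(a)$ and are unaffected by labels on $D(b)$ (and symmetrically for $b$), so the two batches of toggles commute and produce identical labelings in either order.

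The main obstacle is the non-adjacency argument, which uses the \dc structure essentially through part~\ref{dprop:min} of \Cref{prop:diag_props}; once in hand, the rest is a direct consequence of the locality of toggles together with the fact that toggles within a single diagonal already commute by part~\ref{dprop:not_adj} of \Cref{prop:diag_props}.
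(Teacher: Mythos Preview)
Your proof is correct and follows essentially the same approach as the paper's: reduce to an adjacent transposition of incomparable minimal elements $a,b$, invoke \Cref{prop:diag_props}\ref{dprop:chain} to get $D(a)\neq D(b)$ and \Cref{prop:diag_props}\ref{dprop:min} to get non-adjacency, and then conclude commutativity from the locality of toggles. Your write-up is slightly more explicit about why non-adjacency forces commutativity, but the argument is the same.
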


\begin{proof}
    Let $P' = P \sqcup \{a, b\}$ be a \dc poset,
    with $P$ an upper set and $a$, $b$ distinct, non-adjacent elements.
    Then for an ordering $\{p_i\}_{i = 1}^n$ of the elements of $P$,
    appending $a$ and $b$ to the end in either order
    gives a valid insertion order for $P'$.
    We wish to show that the two orders give the same result,
    which will prove the claim
    because any linear extension can be obtained from any other
    by a sequence of such transpositions
    (see, for example, \cite[Lemma~2.3]{ayyer2014combinatorial}).
    In general,
    for a labelled \dc poset $Q' = Q \sqcup \{c\}$ with $Q$ an upper set,
    the computation of $\RSK_{Q'}([t_p]_{p \in Q'})$
    from $\RSK_Q([t_p]_{p \in Q})$
    depends only on the labels of $\RSK_Q([t_p]_{p \in Q})$
    at elements in $D(c)$ and adjacent diagonals,
    and only changes the values at elements in $D(c)$.
    Since $D(a)$ and $D(b)$ are disjoint
    by \Cref{prop:diag_props}\ref{dprop:chain}
    and non-adjacent by \Cref{prop:diag_props}\ref{dprop:min},
    the order of addition of $a$ and $b$ does not matter.
\end{proof}

\section{Properties of RSK}
\label{sec:rsk_prop}

In this section, we give several properties of RSK for \dc posets,
focusing particularly on the relationship between RSK and hook vectors.
Our results, namely \Cref{prop:bijec_d_volume}
and \Cref{prop:bijec_d_hl},
will be crucial for our proof of the multivariate hook length formula.

Viewing $\Ff_P$ and $\Ff'_P$ as subsets (cones, in fact) of $\RR^n$,
we observe that $\RSK_P$ is a piecewise-linear map from $\RR^n$ to $\RR^n$.

\begin{prop}
    \label{prop:bijec_d_volume}
    Let $P$ be a \dc poset with $n$ elements.
    Then the piecewise linear map
    $\RSK_P \colon \RR^n \rightarrow \RR^n$
    is volume-preserving.
\end{prop}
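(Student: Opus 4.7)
The plan is to exploit the construction of $\RSK_P$ as a composition of toggle operations and to verify that each toggle is a piecewise-linear map whose Jacobian determinant on every linear piece is $\pm 1$; the result then follows from the change of variables formula.

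First, I would analyze a single toggle at an element $p \in P$. Identifying $\Ff_P$ with $\RR^{|P|}$ via the basis $\{\be_p\}_{p \in P}$, the toggle fixes every coordinate except $s_p$, which is replaced by $s_x + s_y - s_p$, where $s_x = \max\{s_q : q \gtrdot p\}$ (taken to be $0$ if $p$ is covered by nothing) and $s_y = \min\{s_q : q \lessdot p\}$ (taken to be $0$ if $p$ covers nothing). Since $\max$ and $\min$ over finitely many linear functionals are continuous and piecewise linear, the toggle decomposes $\RR^{|P|}$ into finitely many top-dimensional polyhedral regions on which the argmax $x$ and the argmin $y$ are both constant. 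On any such region, the toggle is a linear map whose matrix $M$ differs from the identity only in row $p$, with $M_{pp} = -1$ and $+1$'s at the entries corresponding to the chosen $x$ and $y$ (omitting an entry when the neighbor does not exist). Cofactor expansion along column $p$, whose only nonzero entry is $M_{pp} = -1$, yields $\det M = -1$.

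Next, by the definition of $\RSK_P$, the map is obtained by inserting elements $p_{j+1}$ one at a time and toggling all elements of the diagonal $D(p_{j+1})$; by \Cref{prop:diag_props}\ref{dprop:not_adj}, these diagonal elements are pairwise non-adjacent, so each sweep is a well-defined composition of toggles acting on disjoint coordinates. Thus $\RSK_P$ is a finite composition of toggle maps. Intersecting the polyhedral decompositions associated to each individual toggle produces a common refinement on which $\RSK_P$ itself is a linear map whose matrix is a product of individual toggle matrices. Each factor has determinant $-1$, so $\RSK_P$ has determinant $\pm 1$ on every full-dimensional piece.

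Finally, I would invoke the change of variables formula: a continuous piecewise-linear map between open subsets of $\RR^{n}$ whose linearization on every top-dimensional piece has determinant of absolute value $1$ is volume-preserving (the lower-dimensional walls between pieces have measure zero and contribute nothing). Applying this to $\RSK_P$ completes the argument. The proof is essentially bookkeeping: the only point requiring a bit of care is to check that the $\max$/$\min$ expressions indeed produce finitely many polyhedral pieces on which the toggle is genuinely linear, and that the matrix written above is the correct Jacobian, both of which are immediate from the formula for the toggle.
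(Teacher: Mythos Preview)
Your proposal is correct and follows essentially the same approach as the paper: decompose $\RSK_P$ into toggles, observe that on each linear region a toggle is given by a matrix equal to the identity except in row $p$ with $M_{pp}=-1$, hence has determinant $-1$, and conclude that the composition is volume-preserving. Your write-up is more careful about the piecewise-linear bookkeeping (the polyhedral decomposition, cofactor expansion, and the change-of-variables step), but the core argument is identical to the paper's.
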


\begin{proof}
    It suffices to prove that $\RSK_P$ is volume-preserving
    on regions on $\RR^n$ where it is linear.
    Since $\RSK_P$ is a composition of toggle operations,
    it suffices to prove that a toggle at $c \in P$
    effects a volume-preserving map $\RR^n \to \RR^n$.
    The matrix describing the linear map induced by a
    toggle at $c$
    has all $1$s on the diagonal
    except for a $-1$ at position $(c, c)$,
    some number of $1$s in the $c$th row,
    and $0$s everywhere else.
    This has determinant $-1$, and thus is volume-preserving.
\end{proof}

The crux of the geometric proof of the usual hook length formula
was the following lemma
(see \cite[Proposition~13]{hopkins2014rsk},
generalizing a computation of \cite[Section~4]{pak2001hook}),
which shows that $\RSK$ actually sends one polytope into the other.

\begin{lemma}\label{claim:diagonalsums}
    If $[a_{ij}]_{(i,j) \in \lambda}$ is a point of $P_{\text{fillings}}$
    and $[r_{ij}]_{(i,j) \in \lambda}$ is its image under RSK, then,
    denoting the hook length of cell $(i, j)$ by $h_{ij}$,
    we have
    \[\sum_{(i,j) \in \lambda} r_{ij} = \sum_{(i,j) \in \lambda}h_{ij}a_{ij}.\]
\end{lemma}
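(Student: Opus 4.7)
My approach is to induct on the RSK insertion step $j$, strengthening the claim to the invariant
\[
    \sum_{q \in P^{(j)}} s_q = \sum_{q \in P^{(j)}} h^{(j)}_q \, a_q,
\]
where $h^{(j)}_q$ is the classical hook length of $q$ inside the Young subdiagram $P^{(j)}$; the lemma is the $j = |\lambda|$ case. The base case $j = 1$ is immediate, so the proof reduces to showing that each insertion step changes both sides by the same amount.

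The change in the right side is straightforward. Since $p = p_{j+1}$ is minimal in $P^{(j+1)}$, its new hook is $\{p\}$ and so $h^{(j+1)}_p = 1$. For $q \in P^{(j)}$, the hook $h^{(j+1)}_q$ exceeds $h^{(j)}_q$ by $1$ precisely when $p$ lies in the classical hook of $q$, that is, when $p$ shares a row or column with $q$ and $p \leq q$. Writing $N(p) \subseteq P^{(j)}$ for the cells above $p$ in its column or left of $p$ in its row, one obtains $\Delta\text{RHS} = a_p + \sum_{q \in N(p)} a_q$.

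The change in the left side is the main work. Enumerate $D(p) \cap P^{(j+1)} = \{p = r_0 < r_1 < \cdots < r_{m-1}\}$ along the diagonal through $p$, and for each $k \geq 1$ let $a_k, b_k$ be the two side elements of the diamond $[r_{k-1}, r_k]$. The toggle at $p$ is $s'_p = \max(s_{a_1},s_{b_1}) + a_p$, and for $k \geq 1$ the toggle at $r_k$ is $s'_{r_k} = \max(s_{a_{k+1}},s_{b_{k+1}}) + \min(s_{a_k},s_{b_k}) - s_{r_k}$. Summing across $k$, the key observation is that each internal pair $(s_{a_k}, s_{b_k})$ appears once as a $\max$ (at the $r_{k-1}$-toggle) and once as a $\min$ (at the $r_k$-toggle), so the region-dependent branches cancel via $\max(x,y) + \min(x,y) = x + y$, leaving
\[
    \Delta\text{LHS} = a_p + \max(s_{a_m},s_{b_m}) + \sum_{k=1}^{m-1}(s_{a_k}+s_{b_k}) - 2\sum_{k=1}^{m-1} s_{r_k},
\]
where $\max(s_{a_m},s_{b_m})$ is taken over those of $a_m, b_m$ that actually lie in $\lambda$.

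To close the induction, I would verify the residual identity $\max(s_{a_m},s_{b_m}) + \sum_{k=1}^{m-1}(s_{a_k}+s_{b_k}) - 2\sum_{k=1}^{m-1} s_{r_k} = \sum_{q \in N(p)} a_q$ by applying the inductive invariant to the nested upper sets of $P^{(j)}$ generated successively by $r_{m-1}, r_{m-2}, \ldots$, which expresses each $s$-label as a combination of $a$'s along the corresponding row and column strips. These combinations then telescope so that each slice $s_{a_k} + s_{b_k} - 2 s_{r_k}$ contributes the $a_q$'s adjacent to the diamond $[r_{k-1}, r_k]$, and the $\max$ term absorbs the cells of $N(p)$ furthest from $p$. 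The main obstacle I anticipate is the boundary bookkeeping when $a_m$ or $b_m$ falls outside $\lambda$; the cleanest way to handle these uniformly will likely require a secondary induction on $m$.
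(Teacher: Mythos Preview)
Your $\Delta\text{LHS}$ telescoping is correct and is precisely the Young-diagram instance of the paper's \Cref{lem:diag_sum_adj}: the cancellation $\max+\min=\text{sum}$ yields
\[
S_{P^{(j)}}(D(p))+S_{P^{(j+1)}}(D(p))=a_p+S_{P^{(j)}}(D_{-1})+S_{P^{(j)}}(D_{+1}),
\]
where $D_{\pm1}$ are the diagonals adjacent to $D(p)$. The gap is the last step. Your residual identity is equivalent to
\[
S_{P^{(j)}}(D_{-1})+S_{P^{(j)}}(D_{+1})-2\,S_{P^{(j)}}(D(p))=\sum_{q\in N(p)}a_q,
\]
a statement about three individual diagonal sums. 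Your scalar hypothesis $\sum_q s_q=\sum_q h^{(j)}_q a_q$ is a single equation and cannot isolate them. The ``nested upper sets'' plan does not help either: for an upper set $U\subsetneq P^{(j)}$ the invariant you could cite concerns the entries of $\RSK_U([a_q]_{q\in U})$, and these are \emph{not} the restriction of $[s_q]=\RSK_{P^{(j)}}([a_q])$ to $U$. Every time an element below $U$ is inserted on a diagonal meeting $U$, labels inside $U$ are toggled and change. So the equalities coming from sub-Young-diagrams speak about different numbers than the $s_{a_k},s_{b_k},s_{r_k}$ in your residual identity, and no amount of telescoping or secondary induction on $m$ bridges that mismatch.

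The paper does not prove this lemma directly (it is cited from Hopkins and Pak); instead it proves the stronger per-diagonal statement \Cref{prop:bijec_d_hl}, whose sum over all $D$ recovers the lemma. That strengthening is exactly the fix your argument needs: carry the hypothesis $S_{P^{(j)}}(D)=\sum_q h^{(D)}_{P^{(j)}}(q)\,a_q$ for \emph{every} diagonal $D$. For $D\ne D(p)$ neither side changes at step $j{+}1$; for $D=D(p)$ your telescoping together with the hypothesis applied to $D_{-1},D_{+1},D(p)$ reduces the residual identity to the elementary hook check
\[
h^{(D_{-1})}_{P^{(j)}}(q)+h^{(D_{+1})}_{P^{(j)}}(q)-2\,h^{(D(p))}_{P^{(j)}}(q)=[q\in N(p)].
\]
Your $\Delta\text{LHS}$ computation survives unchanged as the engine; only the induction hypothesis must be upgraded from one equation to one equation per diagonal.
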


To prove the multivariate generalization of the hook length formula,
we will need a stronger fact involving the diagonals of $P$.
First, we introduce some notation.

\begin{definition}
    Fix $[t_p]_{p \in P} \in \Ff_P$,
    and let $[s_p]_{p \in P} = \RSK_P([t_p]_{p \in P})$.
    For each diagonal $D$ of $P$, define the \emph{diagonal sum}
    $S_P(D) = \sum_{p \in D} s_p$.
\end{definition}
 
With this notation,
our strengthening of \Cref{claim:diagonalsums} is as follows.

\begin{prop}
    \label{prop:bijec_d_hl}
    Let $P$ be a \dc poset.
    Fix $[t_p]_{p \in P} \in \Ff_P$,
    and let $[s_p]_{p \in P} = \RSK_P([t_p]_{p \in P})$.
    Then for every diagonal $D$ of $P$,
    we have $S_P(D) = \sum_{p \in P} h_P^{(D)}(p) t_p$.
\end{prop}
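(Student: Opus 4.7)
My plan is strong induction on $|P|$. The base case $|P| = 1$ is immediate, since $\RSK$ is the identity and the unique element $p$ satisfies $h_P^{(D)}(p) = [D = D(p)]$. For the inductive step I invoke \Cref{prop:rsk_well_def} to pick a minimum $c$ of $P$ as the last element inserted, setting $Q := P \setminus \{c\}$ (an upper set of $P$, hence \dc). Writing $s^Q, s^P$ for the outputs of $\RSK_Q, \RSK_P$, the labels $s^P$ arise from $s^Q$ by assigning $c$ the initial label $-t_c$ and then toggling every element of $D(c)$.

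Fix a diagonal $D$ of $P$. If $D \neq D(c)$, then $D \subseteq Q$ and no element of $D$ is toggled, so $S_P(D) = S_Q(D) = \sum_{p \in Q} h_Q^{(D)}(p) t_p$ by the inductive hypothesis. The conclusion reduces to a structural claim: $h_P^{(D)}(c) = 0$ (immediate, as $c$ is a non-neck minimum outside $D$), and $h_P^{(D)}(p) = h_Q^{(D)}(p)$ for all $p \in Q$. The latter is proved by a sub-induction on the hook recursion, splitting into three cases: non-neck $p$ in $P$ (hence also non-neck in $Q$) is handled by direct counting; neck $p \neq c^\uparrow$ has the same defining $d$-interval in $P$ and $Q$ by \Cref{prop:d_int_unique}; the remaining case $p = c^\uparrow$ requires expanding $\bh_P(c^\uparrow) = \bh_P(w) + \bh_P(z) - \bh_P(c)$ and comparing with the corresponding computation for $\bh_Q(c^\uparrow)$.

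If $D = D(c)$, write $D = \{c_0 = c < c_1 < \cdots < c_m\}$, a chain by \Cref{prop:diag_props}\ref{dprop:chain} with $c_{i+1} = c_i^\uparrow$. Each toggle gives $s^P_{c_i} = s_x^{(c_i)} + s_y^{(c_i)} - s^{\mathrm{bef}}_{c_i}$; summing yields $S_P(D) + S_Q(D) = t_c + \sum_{i=0}^m (s_x^{(c_i)} + s_y^{(c_i)})$. The crucial step is the toggle identity
\[
\sum_{i=0}^m (s_x^{(c_i)} + s_y^{(c_i)}) = \sum_{D' \text{ adjacent to } D(c)} S_Q(D'),
\]
which holds because each element of an adjacent diagonal $D' \subseteq Q$ contributes its $s^Q$-value exactly once to the toggle sum: in a diamond $[c_{i-1}, c_i]$ the two sides combine via the identity $\max(s_w, s_z) + \min(s_w, s_z) = s_w + s_z$, while in a $d_k$-interval with $k \geq 4$ the unique strict-tail and strict-neck elements adjacent to $c_{i-1}, c_i$ contribute single values. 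The fact that every element of $D'$ is indeed adjacent to some $c_i$ follows from \Cref{prop:diag_props}\ref{dprop:adj}, since $c$ is minimal in $P$. Combining with the inductive hypothesis reduces the target identity to the hook identity
\[
h_P^{(D(c))}(p) + h_Q^{(D(c))}(p) = \sum_{D' \text{ adjacent to } D(c)} h_Q^{(D')}(p) \quad \text{for all } p \in Q,
\]
which is a structural fact about hook vectors under upper-set restriction, provable by the same three-case sub-induction as in Case 1.

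The main obstacle is Case 2. The toggle identity hinges on the elementary observation that extrema in diamond contributions collapse via $\max + \min = a + b$, which is exactly what forces the seemingly asymmetric max/min toggles to aggregate into diagonal sums of $\RSK_Q$; the hook identity requires careful tracking of how elements of diagonals adjacent to $D(c)$ populate the hook of $p$, navigating the recursive definition via \Cref{prop:d_int_unique}. Together, these two structural identities combined with the inductive hypothesis on $Q$ complete the proof.
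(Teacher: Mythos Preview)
Your overall architecture matches the paper's: strong induction by removing a minimum $c$, the split $D \neq D(c)$ versus $D = D(c)$, the toggle identity (which is exactly the paper's \Cref{lem:diag_sum_adj}), and a three-case sub-induction on the hook recursion for the structural identities on $h^{(D)}$. So the plan is right.

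There is, however, a real gap in the choice of $c$. You take $c$ to be an \emph{arbitrary} minimum, whereas the paper takes $c$ to be the last element of a \emph{stable insertion order} (\Cref{prop:stable_order}). Stability is precisely what makes the subcase $p = c^\uparrow$ tractable: it guarantees that the side elements $w, z$ of $[c, c^\uparrow]$ are not neck elements in $P$ (hence not in $Q$), so $\bh_P(w), \bh_P(z), \bh_Q(w), \bh_Q(z)$ are all indicator vectors and both your Case~1 identity $h_P^{(D)}(c^\uparrow) = h_Q^{(D)}(c^\uparrow)$ and your Case~2 hook identity reduce to an explicit inclusion--exclusion on $[c,c^\uparrow]$. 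Without stability this is not automatic. Concretely, take the $8$-element \dc poset obtained by attaching a diamond $[w^\downarrow, w]$ with sides $t, v$ onto a $d_4$-interval $[c, c^\uparrow]$ along its top tail $t$ and side $w$ (so $t$ covers $c$ and $w^\downarrow$; $w$ covers $t$ and $v$). Here $c$ is minimal, yet $w$ is a neck element. Your sub-inductive hypothesis gives $h_P^{(D)}(w) = h_Q^{(D)}(w)$, so $h_P^{(D)}(c^\uparrow) = h_Q^{(D)}(w) + h_Q^{(D)}(z)$; but you then need this to equal the direct count $h_Q^{(D)}(c^\uparrow)$, and since $h_Q^{(D)}(w)$ is itself defined via the recursion rather than as a count, ``expanding and comparing'' does not close the loop without further argument. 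The same obstruction hits your Case~2 hook identity at $p = c^\uparrow$. The paper's fix is to restrict to stable orders, but that requires defining them and proving existence, which is the content of \Cref{prop:stable_order}.
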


\begin{remark}
    For Young diagrams,
    summing over all diagonals gives \Cref{claim:diagonalsums}.
\end{remark}

Proving \Cref{prop:bijec_d_hl} is the most technical step of our proof,
and will require a couple intermediary results.

Since we may choose any linear extension of $P$ when computing $\RSK_P$,
it will be useful for us to choose a particularly nice linear extension.

\begin{definition}
    A linear extension $p_1 > p_2 > \dots > p_n$ of $P$ is called
    a \emph{stable insertion order}
    if whenever $p_i$ is a bottom element of a $d$-interval,
    the side elements of that $d$-interval are not neck elements
    of any other $d$-interval in $P^{(i)} = \{p_1, \dots, p_i\}$.
\end{definition}

Informally, this condition says that we should finish inserting
the tail of a $d$-interval that we have already started before
moving on to $d$-intervals that are lower down in $P$.

\begin{example}
    A stable insertion order for the \dc poset from
    \Cref{ex:d_comp} is shown below.
    \begin{center}
        \begin{tikzcd}
            && p_1 \\
            && p_2 \\
            & p_3 && p_4 \\
            p_7 && p_5 && p_8 \\
            & p_9 & p_6 & p_{10} \\
            \arrow[no head, from=1-3, to=2-3]
            \arrow[no head, from=2-3, to=3-2]
            \arrow[no head, from=2-3, to=3-4]
            \arrow[no head, from=3-2, to=4-1]
            \arrow[no head, from=3-2, to=4-3]
            \arrow[no head, from=3-4, to=4-3]
            \arrow[no head, from=3-4, to=4-5]
            \arrow[no head, from=4-1, to=5-2]
            \arrow[no head, from=4-3, to=5-2]
            \arrow[no head, from=4-3, to=5-4]
            \arrow[no head, from=4-5, to=5-4]
            \arrow[no head, from=4-3, to=5-3]
        \end{tikzcd}
    \end{center}
    Although $p_3$ and $p_4$ are neck elements of $d$-intervals in $P$,
    they are not in $P^{(5)}$ and $P^{(6)}$.
    Also, $p_5$ is not a neck element of $P$,
    and thus is not a neck element of $P^{(9)}$ or $P^{(10)}$.
\end{example}

\begin{lemma}\label{prop:stable_order}
    Every \dc poset has at least one stable insertion order. 
\end{lemma}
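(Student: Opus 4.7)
The plan is to proceed by induction on $|P|$, the base case $|P|\le 1$ being trivial. For the inductive step I seek a minimal element $m$ of $P$ which is \emph{compatible}, meaning either $m$ is not the bottom of any $d$-interval in $P$, or the sides $w,z$ of $[m,m^\uparrow]$ satisfy that neither $w^\downarrow$ nor $z^\downarrow$ exists in $P$. Since $P\setminus\{m\}$ is an upper set of $P$ and hence \dc, the inductive hypothesis furnishes a stable insertion order $q_1>\cdots>q_{n-1}$ of $P\setminus\{m\}$, and I claim $q_1>\cdots>q_{n-1}>m$ is stable for $P$. Stability at step $i<n$ transfers from $P\setminus\{m\}$ because $m$, being minimal in $P$, lies in no $d$-interval contained in $\{q_1,\dots,q_i\}$; at step $n$ the condition is either vacuous or, since ``$w$ is a neck of some $d$-interval in $P$'' is equivalent via \Cref{prop:d_int_unique} to ``$w^\downarrow$ exists in $P$'', precisely compatibility.

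The key enabling fact for producing a compatible $m$ is: \emph{if $p$ is the bottom of a $d$-interval $[p,q]$ in $P$ with side $w$ and $w^\downarrow$ exists, then $w^\downarrow\not>p$ in $P$}. Indeed, if $w^\downarrow>p$ then $w^\downarrow\in[p,w]\subseteq[p,q]$; being neither $p$ nor $w$, it must be a strict tail element $t_j$ of $[p,q]=d_k(1)$. But covering relations inside any $d$-interval are inherited by $P$, so $[t_j,w]$ in $P$ is just the chain $t_j<\cdots<t_1<w$, which cannot be a $d$-interval.

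To produce a compatible minimal element: if some $m_0$ in the set $M$ of minimal elements of $P$ is not the bottom of any $d$-interval, it is trivially compatible. Otherwise every $m\in M$ has $m^\uparrow$ defined, and I choose $m\in M$ with $m^\uparrow$ minimal (in the partial order of $P$) among $\{m'^\uparrow:m'\in M\}$. Suppose for contradiction this $m$ fails compatibility; WLOG $w^\downarrow$ exists for a side $w$ of $[m,m^\uparrow]$. By the observation and the minimality of $m$ in $P$, $w^\downarrow$ is incomparable to $m$. Choose $m'\in M$ with $m'\le w^\downarrow$; then $m'\neq m$. If $m'=w^\downarrow$ then \Cref{prop:d_int_unique} gives $m'^\uparrow=w<m^\uparrow$, contradicting the choice of $m^\uparrow$. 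Otherwise $m'<w^\downarrow$; iterating \Cref{prop:d_int_closure} along a covering chain $m'\lessdot\cdots\lessdot w^\downarrow$ forces the chain to remain in the tail of $[m',m'^\uparrow]$, so $w^\downarrow\in[m',m'^\uparrow]$ and hence $w$ is a neck element of $[m',m'^\uparrow]$. Then the second part of \Cref{prop:d_int_unique} applied to $m^\uparrow$ as a neck element of $[m',m'^\uparrow]$ yields $[m,m^\uparrow]\subseteq[m',m'^\uparrow]$, forcing $m=m'$, the desired contradiction. The main obstacle will be rigorously justifying that $m^\uparrow$ actually sits inside $[m',m'^\uparrow]$ rather than merely being incomparable to $m'^\uparrow$; this should follow from the minimality in the choice of $m^\uparrow$ together with careful case analysis of where $w$ and $m^\uparrow$ lie within the sub-$d$-interval structure of $[m',m'^\uparrow]$.
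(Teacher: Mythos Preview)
Your inductive framework is the same as the paper's: find a minimal element $m$ such that the side elements of $[m,m^\uparrow]$ (if it exists) are not neck elements of any $d$-interval, and append $m$ to a stable order of $P\setminus\{m\}$. The divergence is in how that element is selected, and this is where the gap lies.

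The paper does \emph{not} look at $m^\uparrow$. Instead it orders the maximal $d$-intervals of $P$ by the top element of their diamond (equivalently, their bottom neck element) and picks a minimal one $[c,p]$. This criterion immediately forces the sides of $[c,p]$ not to be neck elements elsewhere: if a side $w$ were a neck of some $d$-interval $J$, the diamond of $J$ would have its top $\le w$, strictly below the diamond-top of $[c,p]$, contradicting minimality. The paper then separately checks that $c$ is minimal in $P$.

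Your criterion---minimal $m^\uparrow$ among minimal elements $m$---does not give this so directly, and the argument you sketch in Case~2 has two real issues. First, ``iterating \Cref{prop:d_int_closure} forces the chain to remain in the tail of $[m',m'^\uparrow]$'' is not justified: that proposition only constrains covers of \emph{tail} elements, so once the forced chain from $m'$ passes through the top tail element it enters a side element, after which the chain can leave $[m',m'^\uparrow]$ entirely; there is nothing preventing $w^\downarrow$ from lying outside. Second, even granting $w\in[m',m'^\uparrow]$, you still need $m^\uparrow\le m'^\uparrow$ to invoke \Cref{prop:d_int_unique}, and minimality of $m^\uparrow$ only rules out $m'^\uparrow<m^\uparrow$, leaving the incomparable case open---exactly the obstacle you flag yourself. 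These are not merely missing details: without them the contradiction in Case~2 does not close, and it is unclear whether your particular choice of $m$ must be compatible at all. Switching to the diamond-top ordering (or equivalently to the bottom-neck element of $[m,m^\uparrow]$ rather than $m^\uparrow$ itself) eliminates both problems.
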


\begin{proof}
    We proceed by strong induction under poset inclusion;
    the unique ordering on a singleton $P$
    establishes the base case.
    Now suppose $P$ has more than one element.
    If $P = P' \sqcup \{c\}$ for minimal element $c$ of $P$
    that is not in any $d$-interval,
    then appending $c$ to the end of a stable ordering for $P'$
    gives a stable ordering for $P$.

    Now suppose that all minimal elements 
    of $P$ are in $d$-intervals.
    We may identify each maximal $d$-interval of $P$
    with the top element of its diamond;
    this gives an injective map into $P$
    because by \Cref{prop:d_int_closure}
    distinct maximal $d$-intervals must have distinct diamonds. Thus, this injection induces a partial ordering on the maximal $d$-intervals of $P$.
    Let $[c, p]$ be minimal under this ordering.
    First, we note that the side elements of $[c, p]$
    cannot be neck elements of other $d$-intervals,
    since then those $d$-intervals would contradict
    the minimality of $[c, p]$.

    Now we claim that $c$ is minimal in $P$.
    If not, $c$ covers some element $c'$ of $P$.
    Since $[c, p]$ is a maximal $d$-interval (under inclusion),
    $\{c'\} \cup [c, p]$ cannot be $d^-$-convex. 
    Thus, $c'$ must be covered by an element $y$ of $P$ not in $[c, p]$,
    and since $\{c', c, y\}$ is $d_3^-$-convex,
    there must exist $x$ covering $c$ and $y$.
    Since $c$ is in the tail of $[c, p]$, by \Cref{prop:d_int_closure},
    $x$ must be in $[c, p]$,
    and in particular is a tail element or a side element.
    Either way, the top element of the $d_3$-interval $[c', x]$
    is dominated by the top element of the diamond of $[c, p]$,
    contradicting the minimalty assumption on $[c, p]$.

    Now by the induction hypothesis,
    appending $c$ to the end of a stable insertion order
    for $P \setminus \{c\}$
    gives a stable insertion order.
\end{proof}

To prove \Cref{prop:bijec_d_hl},
we need to understand how diagonal sums are changed
by insertion in RSK.
If $P$ is an upper set of \dc poset $P'$,
then by \Cref{prop:diag_props}\ref{dprop:up_id} diagonals of $P'$
correspond to (possibly empty) diagonals of $P$
and by \Cref{prop:diag_props}\ref{dprop:up_adj} a pair of diagonals
is adjacent in $P$
if and only if it is adjacent in $P'$.
Thus for a filling $[t_p]_{p \in P'}$ of $P'$
and corresponding $[s_p]_{p \in P} = \RSK_P([t_p]_{p \in P})$
and $[s'_p]_{p \in P'} = \RSK_{P'}([t_p]_{p \in P'})$
we may consider both $S_P(D)$ and $S_{P'}(D)$,
where the sum in the former is only over $p \in P$.
The following lemma relates the two quantities.

\begin{lemma}
    \label{lem:diag_sum_adj}
    Let $P' = P \sqcup \{c\}$ be \dc
    with minimal element $c$.
    Let $[t_p]_{p \in P'}$ be a filling of $P'$.
    Let $[s_p]_{p \in P} = \RSK_P([t_p]_{p \in P})$
    and $[s'_p]_{p \in P'} = \RSK_{P'}([t_p]_{p \in P'})$.
    Let $\Dd$ denote the set of diagonals of $P'$ which are adjacent to $D(c)$.
    Then $S_P(D(c)) + S_{P'}(D(c)) = t_c + \sum_{D \in \Dd} S_P(D)$.
\end{lemma}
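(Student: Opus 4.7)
The plan is to unpack both sides of the claimed identity and match them term by term, exploiting the chain structure of $D(c)$. By \Cref{prop:diag_props}\ref{dprop:chain}, write $D(c) = \{c = d_0 < d_1 < \dots < d_m\}$ with each interval $I_i = [d_{i-1}, d_i]$ a $d$-interval. Passing from $[s_p]_{p \in P}$ to $[s'_p]_{p \in P'}$ amounts to labelling $c$ with $-t_c$ and then toggling at every element of $D(c)$; since elements of $D(c)$ are pairwise non-adjacent (\Cref{prop:diag_props}\ref{dprop:not_adj}), these toggles read only labels outside $D(c)$ and hence commute.

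First, I would expand
\[
    S_P(D(c)) + S_{P'}(D(c)) = s'_c + \sum_{i=1}^m (s_{d_i} + s'_{d_i}).
\]
The toggle formula gives $s'_c = s_{x(c)} + t_c$ (no element lies below $c$, so that contribution is $0$) and $s_{d_i} + s'_{d_i} = s_{x(d_i)} + s_{y(d_i)}$ for $i \geq 1$, where $x(p)$ and $y(p)$ denote the element covering, resp.\ covered by, $p$ that has maximal, resp.\ minimal, $s$-value (set to $0$ by convention if none exists). Re-indexing so that $x(d_{i-1})$ pairs with $y(d_i)$ yields
\[
    S_P(D(c)) + S_{P'}(D(c)) = t_c + \sum_{i=1}^m \bigl(s_{x(d_{i-1})} + s_{y(d_i)}\bigr) + s_{x(d_m)}.
\]

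Next, I would identify each pair $s_{x(d_{i-1})} + s_{y(d_i)}$ as a sum over an explicit two-element set $Z_i \subseteq I_i$. By \Cref{prop:d_int_closure}, both the $\max$ and the $\min$ are taken over subsets of $I_i$. If $I_i$ is a diamond, both subsets equal its two sides $\{w_i, z_i\}$, so $\max + \min = s_{w_i} + s_{z_i}$; otherwise each subset is a singleton containing, respectively, the non-strict tail and non-strict neck of $I_i$. Either way $s_{x(d_{i-1})} + s_{y(d_i)} = \sum_{p \in Z_i} s_p$.

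The lemma then reduces to the set-theoretic identity
\[
    \{x(d_m)\} \sqcup \bigsqcup_{i=1}^m Z_i = \bigsqcup_{D \in \Dd}(D \cap P),
\]
with $x(d_m)$ included only when it exists (it is then unique, since two upper covers of $d_m$ would close up to a diamond and force $d_m$ into a strictly larger diagonal, contradicting $d_m = \max D(c)$). The $Z_i$'s are pairwise disjoint because $I_i \cap I_j \subseteq D(c)$ for $i \neq j$, and the containment $\subseteq$ is immediate from construction. For the reverse, \Cref{prop:diag_props}\ref{dprop:adj} applies since $c$ is minimal in $P'$, so any $q \in D \cap P$ with $D$ adjacent to $D(c)$ is adjacent to some $d_i$ in $P'$; a short case split on whether $q$ covers or is covered by $d_i$, combined with \Cref{prop:d_int_closure}, places $q$ into the appropriate $Z_j$ or identifies it with $x(d_m)$. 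I expect this case analysis to be the main obstacle, especially because the same $q$ can be adjacent to multiple elements of $D(c)$ (e.g.\ the sides of a diamond); taking each $Z_i$ to be the \emph{set} $X_i \cup Y_i$ rather than a multiset is what absorbs the potential double counting.
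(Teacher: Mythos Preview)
Your approach is essentially the paper's: both expand the toggle contributions along $D(c)$ and argue that, thanks to the $\max+\min$ identity on diamond sides, every element of an adjacent diagonal is counted exactly once. The paper groups the sum as $\sum_{d\in D(c)}(s_{m_d}+s_{n_d})$ and checks element by element, while you regroup as $\sum_i (s_{x(d_{i-1})}+s_{y(d_i)})+s_{x(d_m)}$ and verify a set-theoretic partition; these are the same bookkeeping viewed from two ends.

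One slip to fix: for a $d_k$-interval $I_i$ with $k\ge 5$, the singleton $\{q:q\gtrdot d_{i-1}\}$ is \emph{not} the non-strict tail (which is the highest tail element, adjacent to the sides) but rather the tail element immediately above $d_{i-1}$, and likewise $\{q:d_i\gtrdot q\}$ is the neck element immediately below $d_i$, not the non-strict neck. Your subsequent reasoning only uses the characterization ``covers $d_{i-1}$'' / ``is covered by $d_i$'' together with \Cref{prop:d_int_closure}, so the mislabelling does not propagate, but the description of $Z_i$ as stated is wrong for $k\ge 5$.
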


\begin{proof}
    For convenience, we set $s_c = -t_c$.
    Redefining $S_P$ accordingly for the rest of this proof,
    we wish to show that $S_P(D(c)) + S_{P'}(D(c)) = \sum_{D \in \Dd} S_P(D)$.
    Also, we assume that all $s_p$ are distinct;
    the case when some $s_p$ coincide will then follow from continuity
    of the piecewise-linear map $\RSK_{P'}$.
    By \Cref{prop:rsk_well_def} we may assume that $c$ was inserted last in RSK,
    so that for each $d \in D(c)$, we have
    $s'_d + s_d = s'_{m_d} + s'_{n_d}$,
    where $m_d$ and $n_d$ are
    the elements covering and covered by $d$,
    respectively,
    with maximal $s'_{m_d}$ and minimal $s'_{n_d}$
    (if $m_d$ or $n_d$ does not exist,
    we set the corresponding $s'_\bullet$ to be $0$).
    By definition, we have $D(m_d)$, $D(n_d) \in \Dd$ for $d \in D(c)$.
    We wish to show that for each element $p$ of $P$
    with $D(p) \in \Dd$,
    the term $s_p$ occurs exactly once
    in the formal sum
    $S_P(D(c)) + S_{P'}(D(c)) = \sum_{d \in D(c)} (s_d + s'_d)
    = \sum_{d \in D(c)} (s_{m_d} + s_{n_d})$.

    By \Cref{prop:diag_props}\ref{dprop:adj},
    the element $p$ is adjacent to some element of $D(c)$.
    We claim that $p$ cannot be covered by two distinct elements of $D(c)$,
    and cannot cover two distinct elements of $D(c)$.
    Suppose first that $p$ covers two elements $c'$ and $c''$ of $D(c)$.
    Since $D(c)$ is a chain by \Cref{prop:diag_props}\ref{dprop:chain},
    we may assume $c' < c''$.
    Because $c'$ is not the largest element in its diagonal,
    it is the tail element of a $d$-interval;
    then by \Cref{prop:d_int_closure},
    $p$ is in the $d$-interval $[c', c'^\uparrow]$, so $c'^\uparrow > p$.
    But $c'' \geq c'^\uparrow$,
    which contradicts $p$ covering $c''$.
    Suppose now that $p$ is covered by two elements $c'$ and $c''$ of $D(c)$,
    and assume $c' > c''$.
    Because $c'$ is not the smallest element in its diagonal,
    it is the neck element of a $d$-interval;
    then by \Cref{prop:d_int_closure},
    $p$ is in the $d$-interval $[c'^\downarrow, c']$,
    and so $p > c'^\downarrow$.
    Since $c'' \leq c'^\downarrow$,
    this contradicts $p$ being covered by $c''$.

    We are now ready to prove that $s_p$ occurs exactly once
    in the formal sum $\sum_{d \in D(c)} (s_{m_d} + s_{n_d})$.
    We consider a few cases.
    \begin{itemize}
        \item The element $p$ is adjacent to only one element $d$ of $D(c)$,
            and $p$ covers $d$.
            We claim that $p$ is the only element covering $d$;
            this will complete the proof since then
            $p = m_d$ and $p$ is adjacent to no other element of $D(c)$.
            If there were another element $p'$ covering $d$,
            then $\{p, p', d\}$ would be $d_3^-$-convex,
            so there would be $d' \in D(c)$
            such that $\{d, p, p', d'\}$ is a diamond.
            This contradicts the assumption that $p$ is adjacent
            to only one element of $D(c)$.
        \item The element $p$ is adjacent to only one element $d$ of $D(c)$,
            and $p$ is covered by $d$.
            We claim that $p$ is the only element covered by $d$;
            this will complete the proof since then
            $p = n_d$ and $p$ is adjacent to no other element of $D(c)$.
            Since $d$ covers an element of $P$,
            we have $d \neq c$, so $d^\downarrow$ exists.
            Then $d$ cannot cover any elements outside of
            the $d$-interval $[d^\downarrow, d]$,
            so if $d$ covers an element $p' \neq p$ of $P$,
            the interval $[d^\downarrow, d]$ must be a diamond
            with side elements $p$ and $p'$.
            But then $p$ is adjacent to $d^\downarrow$, a contradiction.
        \item The element $p$ covers $d$ and is covered by $d'$ in $D(c)$.
            The smallest possible distance
            between the maximal and minimal elements
            of a $d$-interval is $2$,
            so there cannot be elements of $D(c)$ between $d$ and $d'$.
            Thus $[d, d']$ is a diamond, one of whose side element is $p$;
            let the other be $p'$.
            If $s_p < s_{p'}$,
            then $m_d = p$ and $n_{d'} = p'$;
            if $s_p > s_{p'}$,
            then $m_d = p'$ and $n_{d'} = p$.
            Either way, $s_p$ occurs exactly once in the formal sum
            $\sum_{d \in D(c)}(s_{m_d} + s_{n_d})$.
    \end{itemize}
    Since these cases are exhaustive,
    this completes the proof.
\end{proof}

We may now prove \Cref{prop:bijec_d_hl}.

\begin{proof}[Proof of \Cref{prop:bijec_d_hl}]
    We proceed by strong induction under poset inclusion;
    the statement for singleton $P$ is immediate
    because $P$ has only one element and one diagonal.
    Let $P'$ be a \dc poset
    with filling $[t_p]_{p \in P'}$,
    and let $c$ be the last element
    of some stable insertion order on $P'$,
    which exists by \Cref{prop:stable_order}.
    Let $P$ be such that $P' = P \sqcup \{c\}$,
    and assume that the statement of the proposition is true for $P$,
    that is, that for each diagonal $D$, we have
    \[
        S_P(D) = \sum_{p \in P} h_P^{(D)}(p) t_p.
    \]
    We wish to obtain the analogous equality for $P'$,
    whence the result will follow by induction.
    We consider the cases when $D = D(c)$ and when $D \neq D(c)$ separately.
    Within each case, we study hook vectors of individual elements $p$
    and consider three subcases:
    \begin{itemize}
        \item the element $p$ is a neck element of both $P'$ and $P$;
        \item the element $p$ is a neck element of $P'$, but not of $P$;
        \item the element $p$ is not a neck element of $P'$.
    \end{itemize}
    Since $d$-intervals of $P$ are $d$-intervals in $P'$,
    these subcases are exhaustive.
    
    \textit{Case 1:} $D \neq D(c)$.
            Since only labels in $D(c)$ are toggled
            when $c$ is inserted in RSK,
            we have $S_P(D) = S_{P'}(D)$.
            We will prove that for all $p \in P'$,
            we have $h_P^{(D)}(p) = h_{P'}^{(D)}(p)$,
            from which it will follow that
            $S_{P'}(D) = \sum_{p \in P'} h_{P'}^{(D)}(p) t_p$
            as needed.

            We proceed by strong induction on $p$,
            with the base case being minimal elements of $P'$;
            for such $p \neq c$
            we have $h_P^{(D)}(p) = \delta_{D(p), D} = h_{P'}^{(D)}(p)$,
            and for $c$ we have
            $H_P^{(D)}(c) = 0 = \delta_{D(c), D} =  h_{P'}^{(D)}(c)$.
            Now suppose the claim is true for all elements strictly dominated
            by a given $p$.
            We consider several cases.
            \begin{itemize}
                \item The element $p$ is a neck element of both $P'$ and $P$.
                    Then there is an element $q \in P$
                    such that $[q, p]$ is a $d$-interval
                    with side elements $w$ and $z$.
                    Then $\bh_P(p) = \bh_P(w) + \bh_P(z) - \bh_P(q)$
                    and
                    $\bh_{P'}(p) = \bh_{P'}(w) + \bh_{P'}(z) - \bh_{P'}(q)$,
                    so
                    \begin{align*}
                        h_{P'}^{(D)}(p)
                        &= h_{P'}^{(D)}(w) + h_{P'}^{(D)}(z)
                            - h_{P'}^{(D)}(q) \\
                        &= h_P^{(D)}(w) + h_P^{(D)}(z)
                            - h_P^{(D)}(q) \\
                        &= h_P^{(D)}(p)
                    \end{align*}
                    by the induction hypothesis.
                \item The element $p$ is a neck element of $P'$ but not of $P$,
                    so $[c, p]$ is a $d$-interval of $P'$.
                    Let $w$, $z$ be the side elements of $[c, p]$.
                    Since the insertion order is stable,
                    $w$ and $z$ are not neck elements in $P'$.
                    Thus, $\bh_P(w)$ and $\bh_P(z)$ are the indicator vectors
                    of the sets of elements dominated by $w$ and $z$,
                    respectively, in $P$
                    and $\bh_{P'}(w) = \bh_P(w) + \bone_c$
                    and $\bh_{P'}(z) = \bh_P(z) + \bone_c$.
                    Then $\bh_{P'}(p) = \bh_{P'}(w) + \bh_{P'}(z) - \bone_c
                    = \bh_P(w) + \bh_P(z) + \bone_c$.

                    Letting $T$ be the set of elements of $P$
                    dominated by both $w$ and $z$
                    and $N$ be the set of elements of $P$
                    dominated by $p$ but not $w$ or $z$,
                    we have
                    $\bh_P(p) = \bh_P(w) + \bh_P(z) + \bone_N - \bone_T$.
                    Now $N$ is the neck of $[c, p]$
                    and $T \sqcup \{c\}$ is the tail,
                    and each tail element of $[c, p]$
                    corresponds bijectively to a neck element
                    in the same diagonal.
                    Thus, $\bone_N - \bone_T = \bone_c$,
                    giving $\bh_P(p) = \bh_{P'}(p)$
                    as needed.
                \item The element $p$ is not a neck element of $P'$.
                    Then $p$ is not a neck element of $P$ either.
                    By definition, $\bh_P(p)$ and $\bh_{P'}(p)$
                    are indicator vectors of the sets of elements
                    nonstrictly dominated by $p$ in $P$ and $P'$, respectively.
                    Thus, $\bh_{P'}(p) = \bh_P(p) + \bone_c$
                    or $\bh_{P'}(p) = \bh_P(p)$
                    depending on whether $p$ dominates $c$,
                    and since $c \notin D$,
                    we have $h_{P'}^{(D)}(p) = h_P^{(D)}(p)$ either way.
            \end{itemize}
    \textit{Case 2:} $D = D(c)$. Let $\Dd$ be the set of diagonals of $P'$
            which are adjacent to $D(c)$.
            Then $S_P(D(c)) + S_{P'}(D(c)) = t_c + \sum_{D' \in \Dd} S_P(D')$
            by \Cref{lem:diag_sum_adj}.
            We have $h_{P'}^{(D)}(c) = 1$
            and we claim that for $p \in P$, we have
            \[
                h_P^{(D)}(p) + h_{P'}^{(D)}(p)
                = \sum_{D' \in \Dd} h_P^{(D)}(p).
            \]
            The induction step will then follow because
            \begin{align*}
                S_{P'}(D) &= t_c - S_P(D) + \sum_{D' \in \Dd} S_P(D') \\
                &= t_c + \sum_{p \in P} \left( -h_P^{(D)}(p)
                    + \sum_{D' \in \Dd} h_P^{(D')}(p) \right) t_p \\
                &= t_c + \sum_{p \in P} h_{P'}^{(D)}(p) t_p \\
                &= \sum_{p \in P'} h_{P'}^{(D)}(p) t_p.
            \end{align*}
            We again proceed by strong induction on $p$.
            When $p$ is minimal in $P$,
            we have $h_P^{(D)}(p) = \delta_{D(p), D} = 0$,
            and $\bh_{P'}(p)$ is $\bone_p$ or $\bone_p + \bone_c$
            depending on whether $p$ dominates $c$ or not;
            since $p$ dominates $c$ if and only if
            it is in a diagonal adjacent to $D$,
            this proves the claim for minimal $p$.
            Now suppose the claim is true for all elements strictly dominated
            by a given $p \neq c$.
            We consider the same cases as before.
            \begin{itemize}
                \item The element $p$ is a neck element of both $P'$ and $P$.
                    Then there is an element $q \in P$
                    such that $[q, p]$ is a $d$-interval
                    with incomparable elements $w$ and $z$.
                    Then $\bh_P(p) = \bh_P(w) + \bh_P(z) - \bh_P(q)$
                    and
                    $\bh_{P'}(p) = \bh_{P'}(w) + \bh_{P'}(z) - \bh_{P'}(q)$,
                    so
                    \begin{align*}
                        h_{P'}^{(D)}(p) + h_P^{(D)}(p)
                        &= h_{P'}^{(D)}(w) + h_{P'}^{(D)}(z)
                            - h_{P'}^{(D)}(q) \\
                        &\phantom{=}
                            + h_P^{(D)}(w) + h_P^{(D)}(z)
                            - h_P^{(D)}(q) \\
                        &= \sum_{D' \in \Dd}
                            \left( h_P^{(D')}(w) + h_P^{(D')}(z)
                            - h_P^{(D')}(q) \right) \\
                        &= \sum_{D' \in \Dd} h_P^{(D')}(p)
                    \end{align*}
                    by the induction hypothesis.
                \item The element $p$ is a neck element of $P'$ but not of $P$,
                    so $[c, p]$ is a $d$-interval of $P'$.
                    Let $w$, $z$ be the side elements of $[c, p]$.
                    Since the insertion order is stable,
                    $w$ and $z$ are not neck elements in $P'$.
                    Thus, $\bh_P(w)$ and $\bh_P(z)$ are indicator vectors
                    of the sets of elements
                    dominated by $w$ and $z$, respectively, in $P$
                    and $\bh_{P'}(w) = \bh_P(w) + \bone_c$
                    and $\bh_{P'}(z) = \bh_P(z) + \bone_c$.
                    Then $\bh_{P'}(p) = \bh_{P'}(w) + \bh_{P'}(z) - \bone_c
                    = \bh_P(w) + \bh_P(z) + \bone_c$.
                    Since $w$ and $z$ do not dominate $c^\uparrow = p$,
                    we have $h_P^{(D)}(w) = h_P^{(D)}(z) = 0$,
                    whence $h_{P'}^{(D)}(p) = 1$.

                    Letting $T$ be the set of elements of $P$
                    dominated by both $w$ and $z$
                    and $N$ be the set of elements of $P$
                    dominated by $p$ but not $w$ or $z$,
                    we have
                    $\bh_P(p) = \bh_P(w) + \bh_P(z) + \bone_N - \bone_T$.
                    Since 
                    $c^\uparrow = p$
                    we have $h_P^{(D)}(w) = h_P^{(D)}(z) = |T \cap D| = 0$
                    and $|N \cap D| = 1$,
                    whence $h_P^{(D)}(p) = 1$.

                    It follows that $h_P^{(D)}(p) + H_{P'}^{(D)}(p) = 2$.
                    Also, $\bh_P(p)$ is the indicator vector of
                    $[c, p] \setminus \{c\}$
                    and by \Cref{prop:diag_props}\ref{dprop:adj},
                    the only elements of $[c, p] \setminus \{c\}$
                    in a diagonal adjacent to $D$
                    are those which are adjacent to $c$ or $p$.
                    There are two such elements,
                    giving $\sum_{D' \in \Dd} h_P^{(D')}(p) = 2$,
                    as needed.
                \item The element $p$ is not a neck element of $P'$.
                    By definition, $\bh_P(p)$ and $\bh_{P'}(p)$
                    are indicator vectors of the sets of elements
                    nonstrictly dominated by $p$ in $P$ and $P'$, respectively.
                    We now consider two cases
                    based on whether $p$ dominates $c$.

                    \textit{Subcase:}
                    The element $p$ does not dominate $c$.
                    Then $p$ cannot dominate any element of $D$,
                    so $h_P^{(D)}(p) + h_{P'}^{(D)}(p) = 0$.
                    Now suppose for the sake of contradiction
                    that there exists some $d \in D' \in \Dd$
                    dominated by $p$.
                    By \Cref{prop:diag_props}\ref{dprop:adj},
                    the element $d$ is adjacent to some element $c'$
                    of $D(c)$,
                    and by assumption cannot dominate $c'$.
                    Thus $c'$ covers $d$.
                    It follows by \Cref{prop:d_int_closure}
                    that $d$ is in the $d$-interval
                    $[c'^\downarrow, c']$,
                    whence $p$ dominates $c'^\downarrow$ and hence $c$,
                    a contradiction.
                    So $\sum_{D' \in \Dd} h_P^{(D')}(p) = 0$,
                    as desired.

                    \textit{Subcase:}
                    The element $p$ dominates $c$.
                    Then $h_P^{(D)}(p) + h_{P'}^{(D)}(p)
                    = 2h_P^{(D)}(p) + 1$.
                    Let $c = c_0 < \dots < c_k$ be the elements of
                    $D$ dominated by $p$ in $P'$,
                    so that $k = h_P^{(D)}(p)$.
                    Each $d$-interval $[c_i, c_{i + 1}]$
                    contains two elements in some $D' \in \Dd$.
                    Since $p$ is not a neck element in $P'$,
                    we have $p > c_k$ with strict inequality.
                    Also, $c_k$ is covered by exactly one element
                    which is in the principal ideal generated by $p$,
                    since otherwise $c_k^\uparrow$ would exist
                    and thus need to be be in
                    the principal ideal generated by $p$.
                    This gives $2k + 1$ elements
                    counting towards $\sum_{D' \in \Dd} h_P^{(D')}(p)$.
                    Also, every element counting towards this sum
                    has been accounted for;
                    if $d \in D'$ for $D' \in \Dd$
                    is dominated by $p$
                    then $d$ is adjacent to $c' \in C$ by
                    \Cref{prop:diag_props}\ref{dprop:adj},
                    and if $c'$ is not dominated by $p$ then $d$
                    is either a side element of $[c'^\downarrow, c']$,
                    in which case $d$ covers $c'^\downarrow = c_k$
                    and thus has been counted,
                    or $d$ is a neck element of $[c'^\downarrow, c']$
                    which is dominated by $p$
                    and hence covered by some element other than $c'$,
                    contradicting \Cref{prop:d_int_closure}.
                    It follows that
                    $\sum_{D' \in \Dd} h_P^{(D')}(p) = 2k + 1
                    = 2 h_P^{(D)}(p) + 1$, as desired.
            \end{itemize}
    As remarked earlier, these cases are exhaustive, so the proof is complete.
\end{proof}

\begin{remark}
    It follows from the analysis of the evolution of $\bh_P(p)$
    over the RSK insertion process
    in the proof of \Cref{prop:bijec_d_hl}
    that $\bh_P(p)$ has nonnegative entries.
    Thus we may interpret hook vectors
    as multisets of diagonals.
\end{remark}

\section{Proof of multivariate hook length formula}
\label{sec:proof}

From this point,
the proof of \Cref{thm:mainthm} follows that of Pak~\cite{pak2001hook}.

\begin{proof}[Proof of \Cref{thm:mainthm}]
    For a \dc poset $P$ with $n$ elements, we define two polytopes
    $P_\text{fillings} = P_\text{fillings}(\bx)$
    and $P_\text{RPP} = P_\text{RPP}(\bx)$
    depending on positive real parameters $x_D$
    corresponding to diagonals of $P$
    in the $n$-dimensional real space
    of vectors with entries indexed by elements of $P$.
    The polytope $P_\text{fillings}$ will be a subset of $\Ff_P$,
    and the polytope $P_\text{RPP}$ will be a subset of $\Ff'_P$.
    We will find that $\RSK_P$ gives a volume-preserving map
    between $P_\text{fillings}$ and $P_\text{RPP}$
    which yields our result.

    The polytope $P_\text{fillings}$
    consists of all vectors $[t_p]_{p \in P}$ satisfying:
    \begin{itemize}
        \item $t_p \geq 0$ for all $p \in P$;
        \item $\sum_{p \in P} H_p(\bx) t_p \leq 1$.
    \end{itemize}
    Since $P_\text{fillings}$ is a rescaling of the standard $n$-simplex,
    the volume of $P_\text{fillings}$ is
    \[
        \frac1{n!} \prod_{p \in P} \frac1{H_p(\bx)}.
    \]

    The polytope $P_\text{RPP}$
    consists of all vectors $[s_p]_{p \in P}$ satisfying:
    \begin{itemize}
        \item $s_p \geq 0$ for all $p \in P$;
        \item $s_p \geq s_q$ whenever $p \leq_P q$;
        \item $\sum_{p \in P} x_{D(p)} s_p \leq 1$.
    \end{itemize}
    This is the union of polytopes
    defined for each linear extensions $T$ of $P$ given by $p_1 > \dots > p_n$
    as the set of all vectors satisfying:
    \begin{itemize}
        \item $0 \leq s_{p_1} \leq \dots \leq s_{p_n}$;
        \item $\sum_{i = 1}^n x_{D(p_i)} s_{p_i} \leq 1$
    \end{itemize}
    We prove by induction on the dimension of space
    that such a polytope has volume $\frac1{n!}\weight(T)$.
    The only vertex with nonzero $s_{p_1}$ coordinate
    is given by $s_{p_1} = \dots = s_{p_n}$
    and $\sum_{i = 1}^n x_{D(p_i)}s_{p_i} = 1$,
    and thus is located at $(\sum_{i = 1}^n x_{D(p_i)})^{-1} \ind$,
    where $\ind$ is the vector of all $1$s.
    Inductively this gives a volume of
    \[
        \frac1{n!} \prod_{i = 1}^n
        \left( \sum_{j = i}^n x_{D(p_i)} \right)^{-1}
        = \frac1{n!} \weight(T),
    \]
    as claimed.
    Thus $P_\text{RPP}$ has volume
    \[
        \frac1{n!} \sum_{T} \weight(T),
    \]
    where the sum is over linear extensions $T$ of $P$.

    The map $\RSK_P$ is volume-preserving and bijective
    between $\Ff_P$ and $\Ff'_P$.
    We claim that $\RSK_P$ gives a bijection
    between $P_\text{fillings}$ and $P_\text{RPP}$.
    Let $[t_p]_{p \in P} \in P_\text{fillings}$,
    and let $[s_p]_{p \in P} = \RSK_P([t_p]_{p \in P})$.
    Let $\Dd$ be the set of diagonals of $P$.
    By \Cref{prop:bijec_d_hl}, we have
    \begin{align*}
        \sum_{p \in P} x_{D(p)} s_p
        &= \sum_{D \in \Dd} x_D S_P(D) \\
        &= \sum_{D \in \Dd} \sum_{p \in P} x_D h_P^{(D)}(p) t_p \\
        &= \sum_{p \in P} t_p \sum_{D \in \Dd} x_D h_P^{(D)}(p) \\
        &= \sum_{p \in P} H_p(\bx) t_p,
    \end{align*}
    so $[t_p]_{p \in P} \in P_\text{fillings}$
    if and only if $[s_p]_{p \in P} \in P_\text{RPP}$,
    as desired.

    It follows that
    \[
        \sum_T \weight(T) = \prod_{p \in P} \frac1{H_p(\bx)},
    \]
    completing the proof.
\end{proof}

\bibliography{main}
\bibliographystyle{alpha}

\newpage

\appendix

\section{Toggle operations and RSK}
\label{subsec:rsk_sketch}

We refer the readers to \cite{stanley2023enumerative}
for the background on RSK.
We recall that RSK is a bijective
map between $n \times n$ matrices of nonnegative integers,
which can be viewed as fillings of a square Young diagram,
and pairs of semistandard Young tableaux of the same shape.

\begin{example}\label{ex:classic-RSK}
    Consider the following filling of $\lambda = (3,3,3)$:
    \[ \;
    \begin{ytableau} 
        1 & 0 & 2 \\
        0 & 2 & 0 \\
        1 & 1 & 0
        \end{ytableau}.
    \]
    Its corresponding two-line array is
    \[ \left(\begin{matrix}
        1 & 1 & 1 & 2 & 2 & 3 & 3 \\
        1 & 3 & 3 & 2 & 2 & 1 & 2
    \end{matrix}\right). \]
    After insertion, we obtain two SSYTs
    \begin{align*}
        P &= \ctab{\begin{ytableau} 
            1 & 1 & 2 & 2 \\
            2 & 3 \\
            3
        \end{ytableau}},
        &
        Q &= \ctab{\begin{ytableau} 
            1 & 1 & 1 & 3 \\
            2 & 2 \\
            3
        \end{ytableau}}.
    \end{align*}
\end{example}

On the other hand, pick any ordering of the squares of $\lambda$
$(i_1,j_1), (i_2,j_2), \ldots$ such that $(i,j)$ appears before
$(i+1,j)$ and $(i,j+1)$ for all $i,j$. Let $(t_{i,j})$ be any filling
of $\lambda$ with nonnegative integers. One can construct a RPP of
shape $\lambda$ with entries $(p_{i,j})$ recursively as follows.

\begin{enumerate}
    \item If $\lambda = (1)$, set $p_{1,1}  = t_{1,1}$.
    \item Suppose we have built a RPP with the squares
        $(i_1,j_1),\ldots,(i_{\ell-1},j_{\ell-1})$,
    \begin{enumerate}
        \item set $p_{i_\ell,j_\ell} = \max(p_{i-1,j},p_{i,j-1}) + t_{i_\ell,j_\ell}$,
        \item for any other square $(i,j)$ in the same diagonal as
            $(i_{\ell},j_{\ell})$, toggle this square by replacing $p_{i,j}$
            with
            \[
                \max(p_{i-1,j},p_{i,j-1}) + \min(p_{i+1,j},p_{i,j+1})
                - p_{i,j}.
            \]
    \end{enumerate}
    If any of $p_{i-1,j},p_{i,j-1},p_{i+1,j},p_{i,j+1}$ is not in the RPP, we take the value to be $0$.
\end{enumerate}

\begin{example}
    Consider the same filling as in \Cref{ex:classic-RSK}, and order the
    squares as follows:
    \[
        \begin{ytableau} 
            1 & 2 & 3 \\
            4 & 6 & 8 \\
            5 & 7 & 9
        \end{ytableau}.
    \]
    Then, the above procedure is carried out as follows.
    \[
        \ctab{\begin{ytableau} 
            *(yellow) 1
        \end{ytableau}}
        ~\longrightarrow~
        \ctab{\begin{ytableau} 
            1 & *(yellow) 1
        \end{ytableau}}
        ~\longrightarrow~
        \ctab{\begin{ytableau} 
            1 & 1 & *(yellow) 3
        \end{ytableau}}
        ~\longrightarrow~
        \ctab{\begin{ytableau} 
            1 & 1 & 3 \\
            *(yellow) 1
        \end{ytableau}}
        ~\longrightarrow~
        \ctab{\begin{ytableau} 
            1 & 1 & 3 \\
            1 \\
            *(yellow) 2
        \end{ytableau}}
    \]
    \[
        ~\longrightarrow~
        \ctab{\begin{ytableau} 
            *(yellow) 0 & 1 & 3 \\
            1 & *(yellow) 3 \\
            2
        \end{ytableau}}
        ~\longrightarrow~
        \ctab{\begin{ytableau} 
            0 & 1 & 3 \\
            *(yellow) 1 & 3 \\
            2 & *(yellow) 4
        \end{ytableau}}
        ~\longrightarrow~
        \ctab{\begin{ytableau} 
            0 & *(yellow) 2 & 3 \\
            1 & 3 & *(yellow) 3 \\
            2 & 4
        \end{ytableau}}
        ~\longrightarrow~
        \ctab{\begin{ytableau} 
            *(yellow) 1 & 2 & 3 \\
            1 & *(yellow) 2 & 3 \\
            2 & 4 & *(yellow) 4
        \end{ytableau}}
    \]
    In fact, the resulting RPP corresponds to the two SSYTs in
    \Cref{ex:classic-RSK}.
    The lower triangular part of this RPP is the Gelfand–Tsetlin pattern
    \[
        \begin{matrix}
            4 & & 2 & & 1 \\
            & 4 & & 1 & \\
            & & 2 & &
        \end{matrix}.
    \]
    There is a unique SSYT such that the squares containing $1,\ldots,i$
    form the shape given by the $i$th row (from bottom to top) of the
    Gelfand–Tsetlin pattern. This SSYT is the SSYT $P$ in
    \Cref{ex:classic-RSK}. Similarly, the upper triangular part of this
    RPP is the Gelfand–Tsetlin pattern
    \[ \begin{matrix}
        4 & & 2 & & 1 \\
        & 3 & & 2 & \\
        & & 3 & &
    \end{matrix}. \]
    The corresponding SSYT is the SSYT $Q$ in \Cref{ex:classic-RSK}.
\end{example}

\end{document}